\newtheoremstyle{break-italic}
  {0.8ex}
  {}
  {\itshape}
  {0cm}
  {\bfseries}
  {}
  {\newline }
  {}
\newtheoremstyle{break-roman}
  {0.8ex}
  {}
  {\normalfont}
  {}
  {\bfseries}
  {}
  {\newline }
  {}
\theoremstyle{break-italic} 
\newtheorem{theorem}{Theorem}
\newtheorem{lemma}[theorem]{Lemma}
\newtheorem{corollary}[theorem]{Corollary}
\newtheorem{proposition}[theorem]{Proposition}
\theoremstyle{break-roman} 
\newtheorem{definition}[theorem]{Definition}
\newtheorem{example}[theorem]{Example}
\newtheorem{remark}[theorem]{Remark}
\newcommand{\kb}{{\mathcal B}}
\newcommand{\kd}{{\mathcal D}}
\newcommand{\ke}{{\mathcal E}}
\newcommand{\kj}{{\mathcal J}}
\newcommand{\kk}{{\mathcal K}}
\newcommand{\kl}{{\mathcal L}}
\newcommand{\ko}{{\mathcal O}}
\DeclareMathOperator{\expdim}{expdim}
\DeclareMathOperator{\length}{length}
\DeclareMathOperator{\im}{Im}
\DeclareMathOperator{\jet}{jet}
\DeclareMathOperator{\mult}{mult}
\DeclareMathAlphabet{\mathds}{U}{dsrom}{m}{n}
\newcommand{\C}{{\mathds C}}
\newcommand{\F}{{\mathds F}}
\newcommand{\pp}{{\mathds P}}
\newcommand{\tom}[1]{}
\newcommand{\intmult}{\cdot}
\begin{document}

   \parindent0cm

   \title[Triple Point]{Triple-Point Defective Surfaces}
   
   \author{Luca Chiantini}
      \address{Universit\'a degli Studi di Siena\\
     Dipartimento di Scienze Matematiche e Informatiche\\
     Pian dei Mantellini, 44
     I -- 53100 Siena
     }
   \email{chiantini@unisi.it}
  \urladdr{http://www.dsmi.unisi.it/newsito/docente.php?id=4}
   
   \author{Thomas Markwig}
   \address{Universit\"at Kaiserslautern\\
     Fachbereich Mathematik\\
     Erwin-Schr\"odinger-Stra\ss e\\
     D -- 67663 Kaiserslautern
     }
   \email{keilen@mathematik.uni-kl.de}
   \urladdr{http://www.mathematik.uni-kl.de/\textasciitilde keilen}
   \thanks{The second author was supported by the EAGER node of Torino,
     and by the Institute for Mathematics and its Applications (IMA),
     University of Minnesota.}

   \subjclass{14H10, 14J10, 14C20, 32S15}

   \date{30th July, 2009.}

   \begin{abstract}
     In this paper we study the linear series $|L-3p|$ of hyperplane
     sections with a triple point $p$ on a surface $S$ embedded via a very
     ample line bundle $L$ for a \emph{general} point $p$. If this linear
     series does not have the expected dimension we call $(S,L)$
     \emph{triple-point defective}. We show that on a triple-point defective
     surface through a general point every hyperplane section
     has either a triple component or the surface is rationally ruled
     and the hyperplane section contains twice a fibre of the ruling.
   \end{abstract}

   \maketitle

   \section{Introduction}\label{sec:tpd}

   Throughout this note, $S$ will be a smooth projective surface,
   $K=K_S$ will denote the canonical class and $L$ will be a
   divisor class on $S$ such that $L$ is \emph{very ample} and $L-K$
   is \emph{ample and base-point-free}.

   The classical \emph{interpolation problem} for the pair $(S,L)$ is devoted
   to the study of the varieties:
   \begin{displaymath}
     V^{gen}_{m_1,\dots,m_n}=\big\{C\in |L|\;\big|\; p_1,\ldots,p_n\in
     S\mbox{ general},\;\mult_{p_i}(C)
     \geq m_i\big\}.
   \end{displaymath}

   In a more precise formulation, we start from the incidence variety:
   \begin{displaymath}
     \kl_{m_1,\dots,m_n}=\{(C,(p_1,\ldots,p_n))\in|L|\times
     S^n\;|\;\mult_{p_i}(C)\geq m_i\}
   \end{displaymath}
   together with the canonical projections:
   \begin{equation}\label{eq:alphabeta}
     \xymatrix{
       \kl_{m_1,\dots,m_n}\ar[r]^\alpha\ar[d]_\beta & S^n\\
       |L|=\pp(H^0(L)^*)
     }
   \end{equation}
   As for the map $\alpha$, the fibre  over a fixed point
   $(p_1,\dots,p_n)\in S^n$
   is just the linear series $|L-m_1p_1-\dots-m_np_n|$ of effective
   divisors in
   $|L|$ having a point of multiplicity at least $m_i$ at $p_i$.
   These fibres being irreducible, we deduce that if $\alpha$ is
   \emph{dominant} then $\kl_{m_1,\dots,m_n}$ has a unique
   irreducible component, say $\kl_{m_1,\dots,m_n}^{gen}$, which dominates
   $S^n$.
   The closure of its image
   \begin{displaymath}\label{VVV}
     V_{m_1,\dots,m_n}:=V_{m_1,\dots,m_n}(S,L):=\overline{\beta(\kl_{m_1,\dots,m_n}^{gen})}
   \end{displaymath}
   under $\beta$ is an irreducible closed subvariety of
   $|L|$, a \emph{Severi variety} of $(S,L)$.

   Imposing a point of multiplicity $m_i$ corresponds to killing
   $\binom{m_i+1}2$ partial derivatives, so that
   \begin{displaymath}
     \dim|L-m_1p_1-\dots-m_np_n|\geq
     \max\left\{-1,\dim|L|-\sum_{i=1}^n\binom{m_i+1}2\right\},
   \end{displaymath}
   and one expects that the previous inequality is in fact an equality,
   for the choice
   of general points $p_1,\dots,p_n\in S$.

   When this is not the case, then 
   the pair $(S,L)$, is called \emph{defective}
   and is endowed with some special structure. By abuse of notation we
   sometimes call the surface $S$ defective, if $L$ is understood.
   \smallskip

   The case when $m_i=2$ for all $i$ has been classically considered (and
   solved)  by Terracini, who classified in \cite{Ter22} double--point 
   defective surfaces.
   If $S$ is double-point defective, then a general curve
   $C\in|L-2p_1-\dots-2p_n|$ has a double component passing through each
   point $p_i$,
   and the map $\beta$ in Diagram \eqref{eq:alphabeta} has positive dimensional
   fibres.

   When the multiplicities grow, the situation becomes much more complicated.
   Even in the case $S=\pp^2$, the situation is not understood
   and there are several, still unproved conjectures on the structure
   of defective embeddings (see \cite{Cil01} for an introductory survey).

   Let us point out a first difference between the case of multiplicity two
   and the case of higher multiplicity. It is easy to show  that imposing on
   $|L|$ multiplicity two at {\it one} general point always  yields three
   independent conditions, so that $\dim|L-2p|=\dim |L|-3$, the
   expected dimension.  Contrary to this, on some surfaces, it turns out that
   even imposing
   {\it just one} point of multiplicity $3$, one may obtain
   a defective behaviour.

   \begin{example}\label{ex:hirzebruch}
     Let $S=\F_e\stackrel{\pi}{\longrightarrow}\pp^1$ be a
     Hirzebruch surface, $e\geq 0$. We denote by
     $F$ a fibre of $\pi$ and by $C_0$ the section of $\pi$ of
     minimal self intersection $C_0^2=-e$ -- both of which are smooth
     rational curves.
     The general element $C_1$ in the linear system
     $|C_0+eF|$ will be a section of $\pi$ which does not
     meet $C_0$ (see e.g.\ \cite{Har77}, Theorem 2.17).

     Consider now the divisor $L=(2+b)\cdot F+C_1=(2+b+e)\cdot
     F+C_0$ for some fixed $b\geq 0$. Then for a
     general $p\in S$ there are curves $D_p\in|bF+C_1-p|$ and there is a
     unique curve $F_p\in |F-p|$, in particular $p\in F_p\cap D_p$. For each
     choice of $D_p$ we have
     \begin{displaymath}
       2F_p+D_p \in|L-3p|.
     \end{displaymath}
     Since $F{\intmult}L=1=F{\intmult}(L-F)$ we see that every curve in $|L-3p|$ must
     contain $F_p$ as a double component, i.e.
     \begin{displaymath}
       |L-3p|=2F_p+|bF+C_1-p|.
     \end{displaymath}
     Moreover, since $bF+C_1$ is base-point
     free (see \cite{Har77}, Theorem 2.17) we have (see \cite{FP05},
     Lemma 35)
     \begin{multline*}
       \dim|bF+C_1-p|=\dim|bF+C_1|-1\\=h^0\big(\pp^1,\ko_{\pp^1}(b+e)\big)+
       h^0\big(\pp^1,\ko_{\pp^1}(b)\big)-2=2b+e
     \end{multline*}
     and, using the notation from above,
     \begin{displaymath}
       \dim(V_3)\geq \dim|bF+C_1-p|+2=2b+e+2.
     \end{displaymath}
     However,
     \begin{displaymath}
       \dim|L|= h^0\big(\pp^1,\ko_{\pp^1}(2+b+e)\big)
       +h^0\big(\pp^1,\ko_{\pp^1}(2+b)\big)-1
       =2b+e+5,
     \end{displaymath}
     and thus
     \begin{displaymath}
       \expdim(V_3)=\dim|L|-4=2b+e+1<2b+e+2=\dim(V_3).
     \end{displaymath}
     We say, $(\F_e,L)$ is \emph{triple-point defective}, see
     Definition \ref{def:tpd}.

     Note, moreover, that $L$ is very ample, as is $L-K_S$ for
     $b\geq\max\{0,e-3\}$ (see \cite{Har77}, Corollary 2.18), and that
     \begin{displaymath}
       (L-K)^2=\big((4+2e+b)\cdot F+3\cdot C_0)^22=24+6b+3e>16.
     \end{displaymath}
     \hfill$\Box$
   \end{example}
   \medskip

   It is interesting to observe that, even though, in the previous example,
   the general element of $|L-3p|$ is non-reduced, still the map $\beta$
   of Diagram \eqref{eq:alphabeta} has finite general fibres, since
   the general element of $|L-3p|$ has no triple components.
   \smallskip

   The aim of this note is to investigate the structure of pairs $(S,L)$
   for which the linear system $|L-3p|$ for $p\in S$ general has dimension
   bigger that the expected value $\dim|L|-6$. We will show that, under
   some assumptions, ruled surfaces
   are the only case of triple point defective surfaces.

   \begin{definition}\label{def:tpd}
     We say that the pair $(S,L)$ is \emph{triple-point defective}  or, in
     classical
     notation, that \emph{$(S,L)$ satisfies one Laplace equation} if
     \begin{displaymath}
       \dim|L-3p|>\max\{-1,\dim|L|-6\}=\expdim|L-3p|
     \end{displaymath}
     for $p\in S$ general.
   \end{definition}

   \begin{remark}\label{L3}
     Going back to Diagram \eqref{VVV}, one sees that $(S,L)$ is
     triple-point defective if and only if either:
     \begin{itemize}
     \item $\dim|L|\leq 5$ and the projection
       $\alpha:\kl_3\rightarrow S$ dominates, or \smallskip
     \item $\dim|L|>5$ and the general fibre of the map $\alpha$ has
       dimension at least $\dim|L|-5$.
     \end{itemize}
     In particular, $(S,L)$ is triple-point defective if and only if the
     map $\alpha$ is
     \emph{dominant} and
     \begin{displaymath}
       \dim(\kl_3^{gen})>\dim|L|-4.
     \end{displaymath}
   \end{remark}

   The particular case in which the general fibre of the map $\beta$ in
   Diagram \eqref{VVV} is positive-dimensional,
   (i.e.  the general member of $V_3$ contains a triple
   component through $p$) has been studied in \cite{Cas22},
   \cite{FI01}, and \cite{BC05}. We will recall the classification of such
   surfaces
   in Theorem \ref{thm:notfinite} below. Notice that these surfaces are
   almost
   always singular (i.e. $L$ is not very ample), so that they do not appear
   in the statement of  our main theorem, {\it where, indeed, we make no
     assumptions on the
     dimension of the fibres of $\beta$}.
   \smallskip

   One of the major subjects
   in algebraic interpolation theory, namely Segre's conjecture on
   defective linear systems \emph{in the plane},
   suggests in our situation that, when $(S,L)$ is triple-point defective,
   then the general element of $|L-3p|$ must be non-reduced, with a double
   component
   through $p$.
   \smallskip

   We will show here, under some assumptions, that this extension of
   Segre's conjecture for triple point defectivity holds for a single triple point.

   Our main result is:

   \begin{theorem}\label{thm:aim1}
     Let $L$ be a very ample line bundle on $S$, such that $L-K$
     is ample and base-point-free. Assume $(L-K)^2>16$ and $(S,L)$ is triple-point
     defective.

     Then $S$ is ruled in the embedding defined by $L$.
     Moreover a general curve $C\in|L-3p|$ contains the fibre of the
     ruling through $p$
     as fixed component with multiplicity at least two.
   \end{theorem}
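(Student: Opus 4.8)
The plan is to recast triple-point defectivity as a deficiency of the osculating spaces of the embedding $S\subset\pp\big(H^0(L)^*\big)=\pp^N$, with $N:=\dim|L|$, and then to integrate this infinitesimal degeneracy into a global ruling. First I would observe that, for general $p$, a curve in $|L-3p|$ is cut out by a hyperplane meeting $S$ with multiplicity $\geq 3$ at $p$, that is, by a hyperplane containing the second osculating space $T_p^{(2)}$ (the span of a local lift of $S$ together with its partial derivatives up to order two). Counting jets of order $\leq 2$ on a surface gives $\dim T_p^{(2)}\leq 5$, and the hyperplanes through $T_p^{(2)}$ form a linear system of dimension $N-1-\dim T_p^{(2)}$, so that $\dim|L-3p|=N-1-\dim T_p^{(2)}$. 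The expected value $N-6=\dim|L|-6$ is attained exactly when $\dim T_p^{(2)}=5$; hence $(S,L)$ is triple-point defective if and only if $\dim T_p^{(2)}\leq 4$ for general $p$, equivalently the second fundamental form $\mathrm{II}\colon\operatorname{Sym}^2 T_pS\to N_p$ has rank at most two and therefore a nonzero kernel. This is the single Laplace equation of the classical language.

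The second step is to extract geometry from $\ker\mathrm{II}$. A nonzero kernel element $\kappa_p\in\operatorname{Sym}^2 T_pS$ is an unordered pair of (possibly coinciding) \emph{asymptotic} tangent directions, i.e.\ tangent lines having contact of order $\geq 3$ with $S$ at $p$. If the general member of $|L-3p|$ had a triple component then $\beta$ would have positive-dimensional fibres; by the classification recalled in Theorem \ref{thm:notfinite} the resulting surfaces fail to be very ample and so are excluded by hypothesis, leaving the case of finite $\beta$-fibres. In this regime I would show that $\kappa_p$ singles out, for general $p$, one distinguished asymptotic direction $v_p$, and that varying $p$ the $v_p$ define a one-dimensional distribution $D\subset TS$ on a dense open set. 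Being a line field on a surface, $D$ is automatically integrable, so it is tangent to a foliation whose leaves are the candidate fibres of the ruling.

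The third step is to upgrade the leaves to lines and assemble them into a ruling. \emph{The main obstacle lies precisely here:} asymptoticity only places the second derivative in the full tangent plane $\langle x,x_u,x_v\rangle$, whereas a line requires it to lie in the smaller span $\langle x,x_v\rangle$, so one must genuinely bootstrap the order-two degeneracy into a leaf that osculates its projective tangent line to all orders. I would do this by differentiating the Laplace relation along $D$ and using that the leaves are algebraic curves in a very ample embedding, forcing each general leaf $\ell_p$ to be a line of $\pp^N$ contained in $S$. These lines sweep out $S$ in a one-parameter family, giving a ruling $S\to B$ with $L$ of degree one on the fibres; this is also where the quantitative hypothesis $(L-K)^2>16$, together with the ampleness and base-point-freeness of $L-K$, is indispensable: it bounds the invariants of any would-be exceptional surface carrying one Laplace equation without being ruled, and forces $B\cong\pp^1$, so that $S$ is rationally ruled.

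Finally I would establish the multiplicity statement by a short intersection computation, exactly as in Example \ref{ex:hirzebruch}. Writing $F_p=\ell_p$ for the fibre through $p$, very ampleness gives $L\intmult F_p=1$ and $F_p^2=0$. For a general $C\in|L-3p|$ one has $\mult_p(C)=3$ and $\mult_p(F_p)=1$, so if $F_p\not\subset C$ the local multiplicity bound $C\intmult F_p\geq \mult_p(C)\intmult\mult_p(F_p)=3$ contradicts $C\intmult F_p=L\intmult F_p=1$; hence $F_p$ is a component of $C$. Writing $C=F_p+C'$ gives $\mult_p(C')=2$, and repeating the argument with $C'\intmult F_p=(L-F_p)\intmult F_p=1<2=\mult_p(C')\intmult\mult_p(F_p)$ forces $F_p\subset C'$ as well. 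Thus $F_p$ occurs in $C$ with multiplicity at least two and $|L-3p|=2F_p+|L-2F_p-p|$, which is precisely the asserted conclusion.
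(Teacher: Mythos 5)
Your reduction of triple-point defectivity to the degeneracy $\dim T_p^{(2)}\leq 4$ of the second osculating space, and your final intersection computation (from $L{\intmult}F_p=1$, $F_p^2=0$ and $\mult_p(C)=3$ one forces $2F_p\subset C$, exactly as in Example \ref{ex:hirzebruch} and at the end of Proposition \ref{prop:length3}), are both correct. But the core of the theorem --- that under the stated hypotheses the infinitesimal degeneracy integrates to a ruling by curves that $L$ embeds as lines --- is precisely the step you flag as ``the main obstacle'' and then do not prove. ``Differentiating the Laplace relation along $D$'' and appealing to the claim that $(L-K)^2>16$ ``bounds the invariants of any would-be exceptional surface'' is a programme, not an argument: you never identify a mechanism by which the numerical hypothesis enters, and without one the statement is simply not true at this level of generality (the classical theory of surfaces satisfying one Laplace equation contains non-ruled examples, which is why extra hypotheses are needed at all). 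There are two further concrete errors along the way. First, a nonzero $\kappa_p\in\ker\mathrm{II}\subset\operatorname{Sym}^2T_pS$ factors as $v\cdot w$, but unless $v=w$ neither $v$ nor $w$ is an asymptotic direction ($\mathrm{II}(v,v)=0$ is not implied by $\mathrm{II}(v,w)=0$ for the symmetrized product), so your ``distinguished asymptotic direction'' $v_p$ is not well defined from $\ker\mathrm{II}$. Second, you discard the triple-component case by asserting that Theorem \ref{thm:notfinite} makes those surfaces non--very-ample; the paper is explicit that this holds only ``almost always'' and deliberately makes no assumption on the fibres of $\beta$, so this case cannot be waved away.

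For comparison, the paper's route is entirely different and is where the hypothesis $(L-K)^2>16$ does its work: defectivity is converted into $h^1(S,\kj_{Z_p}(L))\neq 0$ for the equimultiplicity scheme $Z_p$ (Proposition \ref{prop:h1vanishing}); Serre's construction produces a rank-two bundle $\ke_p$ with $c_1=L-K$ and $c_2=\length(Z'_p)\leq 4$ (the table \eqref{eq:3jets}), so $(L-K)^2>16$ gives $c_1^2-4c_2>0$ and $\ke_p$ is Bogomolov unstable; the destabilising divisors $A_p$, $B_p$ with $A_p+B_p=L-K$ are then analysed by intersection theory (Lemmas \ref{lem:B^2positive}--\ref{lem:C}, Propositions \ref{prop:length4} and \ref{prop:length3}) to show $B_p$ is a smooth rational curve with $B^2=0$, $L{\intmult}B=1$, moving in a pencil. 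If you want to salvage your approach you would need to supply the missing bridge from the Laplace equation to this kind of quantitative conclusion; as written, the proposal proves only the first and last steps of the theorem.
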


   \begin{remark}
     In the paper \cite{CM07a} we classify all triple-point
     defective linear systems $L$ on ruled surfaces satisfying the
     assumptions of Theorem \ref{thm:aim1}, and it follows from this
     classification that the linear system $|L-3p|$ will contain the
     fibre of the ruling  through $p$ precisely with multiplicity two
     as a fixed component. In particular, the map $\beta$ will
     automatically be generically finite.
   \end{remark}

   Our method is an application of Reider's analysis of rank $2$
   bundles arising from triple points which do not impose independent
   conditions. Under the assumption that $(L-K)^2>16$, the bundle is
   Bogomolov unstable, and we show that from the destabilising divisors
   $A$ and $B=L-K-A$ one gets the multiple fibre. We point out that we
   obtain in this way a natural geometric construction for the
   non--reduced divisor which must be part of any defective linear system.
   
   This application of Reider's construction for the investigation of
   defective surfaces was introduced by Beltrametti, Francia and
   Sommese in \cite{BFS89}. We will refer to \cite{BFS89}
   for the first main properties of the destabilising divisors $A$ and $B$
   (see Section 4 below).
   
   Then, we will use the assumption``$L-K$ ample and base-point-free''
   to control curves of low degree
   on $S$. The freeness of $L-K$ seems unavoidable in the argument of
   the crucial Lemma 14, which in turn implies that $B$ is fixed part free
   and determines a ruling on $S$.

   Let us finish by pointing out in the following corollary what
   happens if we apply our result to 
   $\pp^2$ and its blow ups, and notice
   that, combining results in \cite{Xu95}
   and \cite{Laz97} Corollary\ 2.6, one can give purely numerical
   conditions on 
   $r$ and the $m_i$  such that $L-K$ there is ample and
   base-point-free. 

   \begin{corollary}
     Fix multiplicities  $m_1\leq m_2\leq \dots\leq m_n$.
     Let $H$ denote the class of a line in $\pp^2$ and assume that, for
     $p_1,\dots,p_n$ general in $\pp^2$, the linear
     system $M=rH-m_1p_1-\dots-m_np_n$ is defective, i.e.
     \begin{displaymath}
       \dim|M|>
       \max\left\{-1,\binom{r+2}2-\sum_{i=1}^n\binom{m_i+1}2\right\}.
     \end{displaymath}
     Let $\pi:S\longrightarrow\pp^2$ be the blowing up of $\pp^2$ at the
     points $p_2,\dots,p_n$ and set
     $L:= r\pi^*H-m_2E_2-\dots-m_nE_n$, where $E_i=\pi^*(p_i)$ is the
     i-th exceptional divisor. Assume that $L$ is very ample on $S$,
     of the expected dimension  $\binom{r+2}2-\sum_{i=2}^n\binom{m_i+1}2$,
     and
     that $L-K$ is ample and base-point-free on $S$, with $(L-K)^2>16$.
     Assume, finally, $m_1\leq 3$.

     Then $m_1=3$ and the general element of $M$
     is non-reduced. Moreover $L$ embeds $S$ as a ruled surface.
   \end{corollary}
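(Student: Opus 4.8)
The plan is to recognise this corollary as a direct specialisation of Theorem \ref{thm:aim1}, applied to the pair $(S,L)$ with the \emph{one} remaining general point $p_1$. The first step is to set up the dictionary between linear systems on $\pp^2$ and on $S$. Since the blow-up $\pi$ only affects $p_2,\dots,p_n$, the point $p_1$ does not lie on the exceptional locus and hence stays a single general point of $S$. By definition of $L=r\pi^*H-m_2E_2-\dots-m_nE_n$, a section of $\ko_{\pp^2}(r)$ vanishing to order $m_i$ at each $p_i$ is the same datum as a section of $L$ vanishing to order $m_1$ at $p_1\in S$; therefore $\dim|M|=\dim|L-m_1p_1|$.

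Next I would rewrite the defectivity hypothesis for $M$ as a statement about $(S,L)$. Because $L$ is assumed to be of the expected dimension on $S$, we have $\dim|L|$ equal to the value $\binom{r+2}2-\sum_{i=2}^n\binom{m_i+1}2$, so that the number $\binom{r+2}2-\sum_{i=1}^n\binom{m_i+1}2$ entering the defectivity condition for $M$ is precisely $\expdim|L-m_1p_1|=\max\{-1,\dim|L|-\binom{m_1+1}2\}$. Thus the hypothesis ``$M$ is defective'' translates verbatim into $\dim|L-m_1p_1|>\expdim|L-m_1p_1|$.

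It remains to pin down $m_1$. For $m_1=1$ a single general point imposes exactly one independent condition, since $L$ very ample is in particular base-point-free; for $m_1=2$ a general double point imposes exactly three independent conditions, as recalled in the introduction. In both cases $|L-m_1p_1|$ --- and hence $|M|$ --- would have the expected dimension, contradicting defectivity. Under the assumption $m_1\leq 3$ we are therefore forced to $m_1=3$, and the inequality from the previous step becomes $\dim|L-3p_1|>\max\{-1,\dim|L|-6\}$, i.e.\ $(S,L)$ is triple-point defective in the sense of Definition \ref{def:tpd}.

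Finally I would apply Theorem \ref{thm:aim1}: all its hypotheses ($L$ very ample, $L-K$ ample and base-point-free, $(L-K)^2>16$, and $(S,L)$ triple-point defective) are in force, so $L$ embeds $S$ as a ruled surface and a general curve $C\in|L-3p_1|$ contains the fibre $F$ of the ruling through $p_1$ with multiplicity at least two. Pushing forward along $\pi$, the corresponding general element of $M$ contains $2\,\pi(F)$ and is thus non-reduced. The point needing most care is this last translation: one must check that $F$ is not contracted by $\pi$ --- which holds because the exceptional curves lie over $p_2,\dots,p_n$ and cannot equal the fibre through the general point $p_1$ --- so that $\pi(F)$ is a genuine multiple plane component. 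Apart from this and the bookkeeping of the expected dimension there is no deeper obstacle, the corollary being essentially a specialisation of the main theorem.
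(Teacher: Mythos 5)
Your proposal is correct and follows exactly the route the paper intends: the paper's own proof is the one-line ``just apply Theorem \ref{thm:aim1} to $(S,L)$'', and your write-up simply fills in the implicit bookkeeping (the identification $\dim|M|=\dim|L-m_1p_1|$, the elimination of $m_1\leq 2$ via base-point-freeness and the classical three-conditions fact recalled in the introduction, and the observation that the doubled fibre is not contracted by $\pi$). No gaps; nothing further to add.
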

   \begin{proof}
     Just apply the Main Theorem \ref{thm:aim1} to the pair $(S,L)$.
   \end{proof}

   The reader can easily check that the previous result is exactly the
   translation
   of Segre's and Harbourne--Hirschowitz's conjectures on defective linear
   systems in the plane, for the case of a \emph{minimally} defective
   system with lower multiplicity $3$. The $(-1)$--curve predicted by
   the Harbourne--Hirschowitz conjecture, in this situation, is just the
   pull-back of a line of the ruling.
   Thus, although in a partial situation, we get new evidence for the
   conjecture, at least when the {\it minimal} multiplicity imposed at the
   points is $3$.

   \smallskip
   The paper is organised as follows.

   The case where $\beta$ is not generically finite
   is pointed out in Theorem \ref{thm:notfinite} in Section
   \ref{sec:triplecomponents}.
   In Section \ref{sec:equimultiple} we reformulate the problem as an
   $h^1$-vanishing problem.  The Sections
   \ref{sec:construction} to \ref{sec:ruled}  are
   devoted to the proof of the main result: in Section
   \ref{sec:construction} we
   use Serre's construction and Bogomolov instability in order to show
   that triple-point defectiveness leads to the existence of very
   special divisors $A$ and $B$ on our surface; in Section \ref{sec:zero} we
   show that $|B|$ has no fixed component; in Section
   \ref{sec:generalcase} we then
   list properties of $B$ and we use these in Section \ref{sec:ruled} to
   classify the triple-point defective surfaces.

   The authors wish to thank the referee, who pointed out the possibility
   of weakening one assumption in a preliminary version of the main theorem. 

   \section{Triple Components}\label{sec:triplecomponents}

   In this section, we consider what happens when, in Diagram \eqref{VVV},
    the general fibre of $\beta$ is positive-dimensional,
   in other words, when  the general member of $V_3$ contains a triple
   component through $p$. 

   This case has been investigated (and essentially solved) in \cite{Cas22}, 
   and then re\-phrased in modern language in  \cite{FI01} and \cite{BC05}. 
   
   Although not strictly necessary for the sequel, as our arguments 
   do not make any use of the generic finiteness of $\beta$, 
   (and so we will not assume this), for the sake of completeness
   we recall in this section some example and 
   the classification of pairs $(S,L)$
   which are triple-point defective, and such that a general curve $L_p\in|L-3p|$
   has a triple component through $p$.
   \smallskip

   The family $\kl_3$ of pairs $(L,p)\in |L|\times S$ where $L\in |L-3p|$
   has dimension bounded below by $\dim|L|-4$, and in Remark \ref{L3}
   it has been pointed out that $(S,L)$ is triple-point defective exactly when $\alpha$ is
   dominant and the bound is not attained.
   
   Notice however that $\dim|L|-4$ is {\it not} necessarily a bound for the dimension
   of the subvariety $V_3\subset |L|$, the image of $\kl_3$ under $\beta$.
   The following example (exploited in \cite{LM02}) shows that one may have $\dim(V_3)<\dim|L|-4$ 
   even when $(S,L)$ is \emph{not} triple-point defective.
  
   \begin{example}[(see \cite{LM02})]
      Let $S$ be the blowing up
     of $\pp^2$ at $8$ general points $q_1,\dots,q_8$ and $L$ corresponds to the system
     of curves of degree nine in $\pp^2$, with a triple point at each $q_i$.

     $\dim|L|=6$, but for $p\in S$ general, the unique divisor in
     $|L-3p|$ coincides with the cubic plane curve through $q_1,\dots,q_8,p$,
     counted three times. As there exists only a (non-linear) one-dimensional family
     of such divisors in $|L|$, then $\dim(V_3)=1<\dim|L|-4$. On the other hand,
     these divisors have a triple component, so that the general fibre of
     $\beta$ has dimension one, hence $\dim(\kl_3)=2=\dim|L|-4$.
   \end{example}
   
   The classification of triple-point defective pairs $(S,L)$ for
   which the map $\beta$ is not generically 
   finite is the following.

   \begin{theorem} \label{thm:notfinite}
     Suppose that $(S,L)$ is triple-point defective. Then for $p\in S$ general, 
     the general member of $|L-3p|$
     contains a triple  component through $p$ if and only if
     $S$ lies in a three dimensional scroll $W$ containing a one
     dimensional family of planes, and moreover $W$ is 
     developable, i.e. the tangent space to $W$ is constant along the planes.
   \end{theorem}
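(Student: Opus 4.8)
The plan is to translate the statement into the classical language of osculating spaces. For a general point $p$ of the embedded surface $S\subseteq\pp^N$, $N=\dim|L|$, a hyperplane section has multiplicity $\geq 3$ at $p$ if and only if the corresponding hyperplane contains the second osculating space $\operatorname{Osc}^2_p(S)$, spanned by $p$, the two tangent directions and the three second derivatives of a local parametrisation. Hence $\dim|L-3p|=\dim|L|-6$ exactly when $\operatorname{Osc}^2_p(S)$ has the expected projective dimension $5$, and $(S,L)$ is triple-point defective precisely when $\dim\operatorname{Osc}^2_p(S)\leq 4$ for general $p$ (this is the ``one Laplace equation''). On the other hand, the fibre of $\beta$ over $C\in V_3$ is the locus $\{q\mid\mult_q(C)\geq 3\}$; since on a surface the multiplicity of a divisor along a curve equals the coefficient of that curve, $\beta$ fails to be generically finite exactly when the general $C\in V_3$ has a curve as its triple locus, that is, a triple component $D$. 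So I have to prove that such a triple component exists (for general $p$) if and only if $S$ lies on a developable three-dimensional scroll of planes.

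For the direction ``$\Leftarrow$'', assume $S\subseteq W=\bigcup_t\Pi_t$ with $W$ a three-fold swept out by a one-parameter family of planes $\Pi_t\cong\pp^2$, developable in the sense that the embedded tangent space $\Lambda_t:=T_{W,q}$ is the same $3$-space for all smooth $q\in\Pi_t$. For general $p\in S$ let $\Pi_t$ be the plane through $p$ and set $C_t:=S\cap\Pi_t$, a plane curve through $p$. Choosing a hyperplane $H\supseteq\Lambda_t$ (possible since $\dim\Lambda_t=3$), I would check that $C_t\subseteq\Pi_t\subseteq\Lambda_t\subseteq H$ forces $C_t\subseteq H\cap S$, and that $T_{S,q}\subseteq\Lambda_t\subseteq H$ along $C_t$ makes $H\cap S$ singular all along $C_t$. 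Planarity of $C_t$ keeps its intrinsic second derivatives inside $\Pi_t\subseteq H$, while developability forces the mixed second derivatives to stay in $\Lambda_t\subseteq H$; together these give $\operatorname{Osc}^2_q(S)\subseteq H$ for $q\in C_t$, so $H\cap S=3C_t+R$. Thus $C_t$ is a triple component through $p$, and the extra parameters in the family of such $H$ exhibit $(S,L)$ as triple-point defective.

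For the direction ``$\Rightarrow$'', start from a triple component $D$ of the general $C\in V_3$ through the general point $p$. Since $C$ is still triple at every general point $q\in D$, the same $C$ serves for all these $q$; hence the triple components do not depend on the chosen point of $D$ and sweep out $S$ in a genuine one-parameter family $\{D_t\}$. For the hyperplane $H_t$ with $H_t\cap S=3D_t+R_t$ one has $\operatorname{Osc}^2_q(S)\subseteq H_t$ for all $q\in D_t$, so the osculating spaces along $D_t$ span at most a hyperplane. Combined with their individual deficiency ($\dim\leq 4$), I would deduce that $\operatorname{Osc}^2_q(S)$ is constant modulo a plane as $q$ moves along $D_t$: the moving part traces out a plane $\Pi_t\supseteq D_t$ (so that $D_t$ is a plane curve), while the fixed part is a $3$-space $\Lambda_t$ which is the common tangent space of the three-fold $W:=\bigcup_t\Pi_t\supseteq S$. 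Constancy of $\Lambda_t$ along $\Pi_t$ is exactly developability of $W$.

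The main obstacle is this last step of the forward direction: extracting from the single numerical condition that $\operatorname{Osc}^2_q(S)$ is deficient and contained in a common hyperplane along $D_t$ the rigid geometric conclusion that $D_t$ is planar and that the osculating $3$-spaces are constant along the plane it spans. This is precisely the analysis of second-order contact carried out in \cite{Cas22} and recast in \cite{FI01} and \cite{BC05}; concretely it amounts to differentiating a local frame of $\operatorname{Osc}^2$ along $D_t$ and reading off, from the forced linear dependences among the third derivatives, both the planarity of $D_t$ and the developability of $W$.
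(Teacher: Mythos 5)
Your framework---reading triple points as containment of the second osculating space $\operatorname{Osc}^2_q(S)$ in a hyperplane, and a triple component as such a containment along a whole curve---is the same classical setup the paper uses, but both directions of your argument have genuine gaps.

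In the direction ``$\Rightarrow$'' you stop exactly at the decisive step: from ``the spaces $\operatorname{Osc}^2_q(S)$, $q\in D_t$, are each of dimension at most $4$ and lie in a common hyperplane'' you still must extract that $D_t$ is a plane curve and that the tangent space of $W$ is constant along the plane it spans, and you defer this to \cite{Cas22}, \cite{FI01}, \cite{BC05}. The paper actually proves it, and the mechanism is the piece you are missing: since the hyperplanes with a triple point at a general $p$ meet in a $\pp^4$, one projects $S$ into $\pp^5$; there the hyperplane $H$ with triple contact at a general point is unique, so $V_3$ is a curve, and taking two consecutive infinitesimally near hyperplanes $H',H''$ to $H$ on $V_3$ one gets $C\subseteq H\cap H'\cap H''$, a linear space of codimension $3$ in $\pp^5$, i.e.\ a plane. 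Developability then follows because the tangent line to $V_3$ yields a pencil of hyperplanes tangent to $S$ along all of $C$, hence a fixed $\pp^4$ tangent along $C$, which together with the plane of $C$ pins down the tangent space of $W$ at every point of that plane. Without some such argument your ``I would deduce'' is an assertion, not a proof.

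In the direction ``$\Leftarrow$'' there is an actual error. A hyperplane $H\supseteq\Lambda_t$ controls $q$, $q_x$, $q_y$, $q_{xx}$, $q_{xy}$ along $C_t$ (with $x$ the direction of $C_t$), but not the transverse second derivative $q_{yy}$, which also lies in $\operatorname{Osc}^2_q(S)$. Concretely, if $f$ is the local equation of $H\cap S$ and $C_t=\{y=0\}$, the vanishing of $f,f_x,f_y,f_{xx},f_{xy}$ along $C_t$ only forces $f=y^2v$, i.e.\ $2C_t\subseteq H\cap S$; the triple component additionally requires $f_{yy}=0$ along $C_t$, i.e.\ $q_{yy}\in H$ for all $q\in C_t$. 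This is why the paper works with the $\pp^4$ spanned by the tangent space $t$ of $W$ and $p_{yy}$, rather than with $\Lambda_t$ alone, and it is also where the standing hypothesis that $(S,L)$ is triple-point defective (which you attempt to derive at the end rather than use) is needed to fit the osculating spaces along $C_t$ into a single hyperplane.
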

   \begin{proof} 
     First, since we assume that $S$ is triple-point defective and
     embedded in $\pp^r$ via $L$, then the hyperplanes
     $\pi$ that meet $S$ in a divisor $H=S\cap \pi$ with a triple point at a general
     $p\in S$, intersect in a $\pp^4$. Thus we may project down $S$ to $\pp^5$ and work
     with the corresponding surface.

     In this setting, through a general $p\in S$ one has only one hyperplane $\pi$ with
     a triple contact, and $\pi$ has a triple contact with $S$ along the fibre $C$ of
     $\beta$. Thus $V_3$ is a curve.

     If $H', H''$ are two consecutive infinitesimally near points to $H$ on $V_3$, then
     $C$ also belongs to $H\cap H'\cap H''$. Thus $C$ is a plane curve and $S$ is fibred
     by a $1$-dimensional family of plane curves. This determines the three dimensional scroll $W$.

     The tangent line to $V_3$ determines in $(\pp^5)^*$ a pencil of hyperplanes which are
     tangent to $S$ at any point of $C$, since this is the infinitesimal deformation
     of a family of hyperplanes with a triple contact along any point of $C$. Thus
     there is a $\pp^4=H_C$ which is tangent to $S$ along $C$.

     Assume that $C$ is not a line. Then $C$ spans a $\pp^2=\pi_C$ fibre of $W$, moreover the
     tangent space to $W$ at a general point of $C$ is spanned by $\pi_C$ and $T_{S,P}$, hence
     it is constantly equal to $H_C$. Since $C$ spans $\pi_C$, then it turns out that
     the tangent space to $W$ is constant at any point of $\pi_C$, i.e. $W$ is developable.

     When $C$ is a line, then arguing as above one finds that all the  tangent planes
     to $S$ along $C$ belong to the same $\pp^3$. This is enough to conclude
     that $S$ sits in some developable $3$-dimensional scroll.

     Conversely, if $S$ is contained in the developable scroll $W$, then at a general point
     $p$, with local coordinates $x,y$, the tangent space $t$ to $W$ at $p$ contains the
     derivatives $p, p_x, p_y, p_{xx}, p_{xy}$ (here $x$ is the direction of the tangent
     line to $C$). Thus the $\pp^4$ spanned by $t,p_{yy}$ intersects $S$ in a triple curve
     along $C$.
   \end{proof}

   \section{The Equimultiplicity Ideal}\label{sec:equimultiple}

   If $L_p$ is a curve in $|L-3p|$ we denote by $f_p\in\C\{x_p,y_p\}$ an
   equation of $L_p$ in local coordinates $x_p$ and $y_p$ at $p$.
   If $\mult_p(L_p)=3$, the ideal sheaf $\kj_{Z_p}$ whose stalk 
   at $p$ is the equimultiplicity   ideal
   \begin{displaymath}
     \kj_{Z_p,p}=\left\langle\frac{\partial
         f_p}{\partial x_p},\frac{\partial f_p}{\partial
         y_p}\right\rangle + \langle x_p,y_p\rangle^3
   \end{displaymath}
   of $f_p$ defines a zero-dimensional scheme
   $Z_p=Z_p(L_p)$ concentrated at $p$, and the tangent space
   $T_{(L_p,p)}({\kl_3})$ of $\kl_3$ at $(L_p,p)$ satisfies (see
   \cite{Mar06} Example 10)
   \begin{displaymath}
     T_{(L_p,p)}({\kl_3})\cong \big(H^0\big(S,\kj_{Z_p}(L_p)\big)/H^0(S,\ko_S)\big)\oplus\kk,
   \end{displaymath}
   where $\kk$ is zero unless $L_p$ is unitangential at $p$, in which
   case $\kk$ is a one-dimensional  vector space.

   In particular, $\kl_3$ is smooth at $(L_p,p)$ of the expected
   dimension (see \cite{Mar06} Proposition 11)
   \begin{displaymath}
     \expdim(\kl_3)=\dim|L|-4
   \end{displaymath}
   as soon as
   \begin{displaymath}
     h^1\big({S},\kj_{Z_p}(L)\big)=0.
   \end{displaymath}
   We thus have the following proposition.

   \begin{proposition}\label{prop:h1vanishing}
     Suppose that $\alpha$ is surjective,
     then $(S,L)$ is not triple-point defective if 
     \begin{displaymath}
       h^1\big({S},\kj_{Z_p}(L)\big)=0
     \end{displaymath}
     for general $p\in S$ and $L_p\in|L|$ with $\mult_p(L_p)=3$.

     Moreover, if $L$ is non-special, i.e.\ if $h^1(S,L)=0$, the above $h^1$-vanishing is also
     necessary for the non-triple-point-defectiveness of $(S,L)$.
   \end{proposition}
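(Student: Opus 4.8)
The plan is to read off both implications from the two facts recalled immediately above the statement --- the tangent-space isomorphism $T_{(L_p,p)}(\kl_3)\cong\big(H^0(S,\kj_{Z_p}(L))/H^0(S,\ko_S)\big)\oplus\kk$ and the smoothness criterion of \cite{Mar06} --- together with the numerical characterisation of defectiveness in Remark \ref{L3}. Since $\alpha$ is surjective it is in particular dominant, so the component $\kl_3^{gen}$ exists and dominates $S$; by Remark \ref{L3} the pair $(S,L)$ is triple-point defective exactly when $\dim(\kl_3^{gen})>\dim|L|-4$. The whole argument is therefore about computing $\dim(\kl_3^{gen})$, which I would do at a general point $(L_p,p)\in\kl_3^{gen}$. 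Such a point lies over a general $p\in S$, and for it the general member $L_p\in|L-3p|$ has $\mult_p(L_p)=3$, so that $\kj_{Z_p}$, $Z_p$ and $\kk$ are indeed defined there.

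For the sufficiency I would argue directly. A general point $(L_p,p)$ of $\kl_3^{gen}$ has $\mult_p(L_p)=3$, so the hypothesis gives $h^1(S,\kj_{Z_p}(L))=0$, and \cite{Mar06} Proposition 11 then shows that $\kl_3$ is smooth at $(L_p,p)$ of dimension $\dim|L|-4$. As $(L_p,p)$ is general in $\kl_3^{gen}$, this forces $\dim(\kl_3^{gen})=\dim|L|-4$, and Remark \ref{L3} yields that $(S,L)$ is not triple-point defective. This direction passes only through the cited smoothness criterion and so, consistently with the statement, does not use the non-speciality of $L$.

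For the necessity, under the extra hypothesis $h^1(S,L)=0$, I would turn the tangent-space isomorphism into an exact formula for $h^1(S,\kj_{Z_p}(L))$. From the long exact sequence of $0\to\kj_{Z_p}(L)\to\ko_S(L)\to\ko_{Z_p}\to 0$, using $h^1(S,L)=0$, one gets $h^1(S,\kj_{Z_p}(L))=\length(Z_p)-h^0(S,L)+h^0(S,\kj_{Z_p}(L))$, while the tangent-space isomorphism gives $h^0(S,\kj_{Z_p}(L))=\dim T_{(L_p,p)}(\kl_3)+1-\dim\kk$. Combining the two with the local identity $\length(Z_p)=4+\dim\kk$ collapses everything to $h^1\big(S,\kj_{Z_p}(L)\big)=\dim T_{(L_p,p)}(\kl_3)-(\dim|L|-4)$. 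Now if $(S,L)$ is not triple-point defective then $\dim(\kl_3^{gen})=\dim|L|-4$ by Remark \ref{L3}, and since over $\C$ a general point of the variety $\kl_3^{gen}$ is a smooth point, the Zariski tangent space there has dimension exactly $\dim|L|-4$; feeding this into the displayed formula gives $h^1(S,\kj_{Z_p}(L))=0$, as wanted.

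The one genuinely local ingredient, and the step I would treat most carefully, is the identity $\length(Z_p)=4+\dim\kk$. Since $\langle x_p,y_p\rangle^3\subseteq\kj_{Z_p,p}$, the colength equals $6-\dim_\C\mathrm{span}\{(f_3)_x,(f_3)_y\}$, where $f_3$ is the tangent cone (the degree-$3$ part of $f_p$), the higher-order terms contributing nothing modulo $\langle x_p,y_p\rangle^3$. A direct check on the three factorisation types of the binary cubic $f_3$ shows that $\mathrm{span}\{(f_3)_x,(f_3)_y\}$ is two-dimensional unless $f_3$ is the cube of a linear form, in which case it is one-dimensional; as the latter is precisely the unitangential case where $\dim\kk=1$, this gives $\length(Z_p)=4+\dim\kk$ in every case. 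The remaining mild point to keep an eye on is that the general member of a general $|L-3p|$ really has multiplicity exactly $3$ at $p$, so that $\kk$ and $Z_p$ are defined at $(L_p,p)$; this holds whenever $|L-3p|\neq|L-4p|$, which is the only situation in which the statement has content.
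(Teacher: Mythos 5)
Your sufficiency argument is exactly the paper's: the proposition is stated as an immediate consequence of the smoothness criterion of \cite{Mar06} quoted just above it, and your use of Remark \ref{L3} to pass from $\dim(\kl_3^{gen})=\dim|L|-4$ to non-defectiveness is the intended reading. Your bookkeeping identity $h^1\big(S,\kj_{Z_p}(L)\big)=\dim T_{(L_p,p)}(\kl_3)-(\dim|L|-4)$, including the local check $\length(Z_p)=4+\dim\kk$, is also correct and consistent with Table \eqref{eq:3jets} (lengths $4,4,5$ with $\dim\kk=0,0,1$).

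The gap is in the last step of the necessity direction. From non-defectiveness you obtain $\dim(\kl_3^{gen})=\dim|L|-4$, and you then invoke generic smoothness to conclude that the Zariski tangent space at a general point of $\kl_3^{gen}$ has exactly this dimension. But generic smoothness applies to the \emph{reduced} variety, whereas the tangent space in the formula from \cite{Mar06} is that of the naturally defined scheme $\kl_3$ (cut out by the vanishing of the $2$-jet), which could a priori be non-reduced along $\kl_3^{gen}$; in that case $\dim T_{(L_p,p)}(\kl_3)>\dim(\kl_3^{gen})$ at \emph{every} point of the component. Ruling this out is essentially equivalent to the vanishing $h^1=0$ you are trying to prove, so the argument is circular at exactly this point. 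The necessity direction is better proved directly: since $|L-3p|=\pp\big(H^0(S,\mathfrak{m}_p^3(L))\big)$ and $\chi\big(\mathfrak{m}_p^3(L)\big)=\chi(L)-6$, non-defectiveness together with $h^1(S,L)=0$ forces $h^1\big(S,\mathfrak{m}_p^3(L)\big)=0$ whenever $h^0(L)\geq 6$ (and the case $h^0(L)<6$ is vacuous, as then $|L-3p|=\emptyset$); since $\mathfrak{m}_p^3\subseteq\kj_{Z_p,p}$, the quotient $\kj_{Z_p}/\mathfrak{m}_p^3$ is a skyscraper, so the long exact sequence gives a surjection $H^1\big(S,\mathfrak{m}_p^3(L)\big)\twoheadrightarrow H^1\big(S,\kj_{Z_p}(L)\big)$ and hence $h^1\big(S,\kj_{Z_p}(L)\big)=0$.
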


   Note that by Kodaira vanishing $L$ is non-special whenever $L-K$ is
   ample.

   \section{The Basic Construction}\label{sec:construction}

   \medskip
   \begin{center}
     \framebox[12cm]{
       \begin{minipage}{11.3cm}
         \medskip
         \emph{From now on we assume that for $p\in S$ general
           $\exists\;L_p\in|L|$ s.t.
           $$h^1\big({S},\kj_{Z_p}(L)\big)\not=0.$$}
         \vspace*{-2ex}
       \end{minipage}
       }
   \end{center}
   \medskip

   Then by Serre's construction for a subscheme $Z'_p\subseteq Z_p$ with
   ideal sheaf $\kj_p=\kj_{Z_p'}$ of minimal length such that
   $h^1\big(S,\kj_p(L)\big)\not=0$ there is a rank two bundle $\ke_p$
   on $S$ and a section $s\in H^0(S,\ke_p)$ whose $0$-locus is $Z'_p$,
   giving the exact sequence
   \begin{equation}\label{eq:vectorbundle}
     0\rightarrow
     \ko_S\rightarrow\ke_p\rightarrow\kj_p(L-K)\rightarrow 0.
   \end{equation}
   The Chern classes of $\ke_p$ are
   \begin{displaymath}
     c_1(\ke_p)=L-K\;\;\;\mbox{ and }\;\;\;
     c_2(\ke_p)=\length(Z'_p).
   \end{displaymath}
   Moreover, $Z'_p$ is automatically a complete
   intersection.

   We would now like to understand what $\kj_p$ is depending on
   $\jet_3(f_p)$, which in suitable local coordinates will be one
   of those in Table \eqref{eq:3jets}.
   For this we first of all note that the very ample
   divisor $L$ separates all subschemes of $Z_p$ of length
   at most two. Thus $Z'_p$ has length at least $3$, and due to Lemma
   \ref{lem:ideals} below we are in one of the following situations:

   \begin{equation}\label{eq:3jets}
     \begin{array}{|c|c|c|c|c|}
       \hline
       \jet_3(f_p)&\kj_{Z_p,p}&\length(Z_p)&\kj_p=\kj_{Z'_p,p}&c_2(\ke_p)\\
       \hline\hline
       x_p^3-y_p^3 & \langle x_p^2,y_p^2 \rangle & 4 & \langle x_p^2,y_p^2 \rangle & 4\\\hline
       x_p^2y_p  &\langle x_p^2, x_py_p,y_p^3\rangle & 4 &\langle x_p,y_p^3 \rangle & 3\\\hline
       x_p^3 & \langle x_p^2,x_py_p^2,y_p^3\rangle & 5&\langle x_p^2,y_p^2 \rangle & 4\\\hline
       x_p^3 & \langle x_p^2,x_py_p^2,y_p^3\rangle & 5&\langle x_p,y_p^3 \rangle & 3\\\hline
     \end{array}
   \end{equation}

   \bigskip

   \begin{lemma}\label{lem:ideals}
     If $f\in R=\C\{x,y\}$ with $\jet_3(f)\in\{x^3-y^3,x^2y,x^3\}$, and if $I=\langle
     g,h\rangle\lhd R$ such that
     $\dim_\C(R/I)\geq 3$ and $\big\langle \frac{\partial f}{\partial
       x},\frac{\partial f}{\partial y}\big\rangle+\langle
     x,y\rangle^3\subseteq I$, then we may assume that we are in
     one of the following cases:
     \begin{enumerate}
     \item $I=\langle x^2,y^2\rangle$ and
       $\jet_3(f)\in\{x^3-y^3,x^3\}$, or
     \item $I=\langle x,y^3\rangle$ and $\jet_3(f)\in\{x^2y,x^3\}$.
     \end{enumerate}
   \end{lemma}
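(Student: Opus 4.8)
The plan is to read Lemma~\ref{lem:ideals} as a classification, carried out entirely in the local ring $R=\C\{x,y\}$ with maximal ideal $\mathfrak m=\langle x,y\rangle$, of the two--generated ideals of finite colength lying over the equimultiplicity ideal. The remark I would use throughout is that $\mathfrak m^3\subseteq\langle x,y\rangle^3\subseteq I$ and that every term of $\partial f/\partial x$ and $\partial f/\partial y$ beyond its quadratic part already lies in $\mathfrak m^3$. Writing $J=\big\langle\partial f/\partial x,\partial f/\partial y\big\rangle+\mathfrak m^3$, I can therefore absorb all higher order terms of the partials and compute $J$ from their quadratic parts alone, obtaining $J=\langle x^2,y^2\rangle$ for $\jet_3(f)=x^3-y^3$, $J=\langle x^2,xy,y^3\rangle$ for $\jet_3(f)=x^2y$, and $J=\langle x^2,xy^2,y^3\rangle$ for $\jet_3(f)=x^3$, of colengths $4$, $4$ and $5$ respectively. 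This is precisely the middle part of Table~\eqref{eq:3jets}.

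Since $J\subseteq I$ forces $\dim_\C(R/I)\le\dim_\C(R/J)$, the hypothesis $\dim_\C(R/I)\ge 3$ confines the colength of $I$ to $\{3,4\}$ in the first two cases and to $\{3,4,5\}$ in the last. By hypothesis $I=\langle g,h\rangle$ is generated by two elements and has finite colength, hence is a complete intersection; this is the structural fact that lets me throw out the unwanted ideals, all of which turn out to need three generators. I would organise the argument by the dichotomy: either $I$ contains an element of order one (equivalently $I\not\subseteq\mathfrak m^2$), or $I\subseteq\mathfrak m^2$.

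If $I\not\subseteq\mathfrak m^2$, write $I=\langle\ell,h\rangle$ with $\ell$ of order one; then $\dim_\C(R/I)$ equals the order of $h$ along the smooth branch $\ell=0$, and $J\subseteq I$ forces the restriction of $J$ to that branch to have order at least three. Running $J$ through the lines shows that a line $y=mx$ always leaves $x^2$, and hence order two, so the only admissible branch is $x=0$; this immediately rules out $\jet_3(f)=x^3-y^3$ and otherwise forces $I=\langle x,y^3\rangle$, which contains $J$ exactly for $\jet_3(f)\in\{x^2y,x^3\}$. If instead $I\subseteq\mathfrak m^2$, I would enumerate the candidates inside the finite dimensional algebra $R/J$ using its explicit multiplication table: passing from $J$ to $I$ by adjoining one element $\bar v\in\mathfrak m^2/J$ lowers the colength by the dimension of the principal ideal it generates. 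For $\jet_3(f)=x^3-y^3$ this returns $I=J=\langle x^2,y^2\rangle$; for $x^2y$ the only candidates of colength $\ge 3$ contained in $\mathfrak m^2$ are $J=\langle x^2,xy,y^3\rangle$ and $\mathfrak m^2$, neither of which is two--generated, so both are discarded; and for $x^3$ the two--generated survivors are $I=\langle x^2,\gamma xy+\delta y^2\rangle$ with $\delta\neq0$, the choices with $\delta=0$ again producing the non--complete--intersection ideals $\mathfrak m^2$ or $\langle x^2,xy,y^3\rangle$. Completing the square, $\gamma xy+\delta y^2\equiv\delta\,(y+\tfrac{\gamma}{2\delta}x)^2\bmod x^2$, so the linear change $X=x$, $Y=y+\tfrac{\gamma}{2\delta}x$ carries $I$ to $\langle X^2,Y^2\rangle$; this form contains $J$ exactly for $\jet_3(f)\in\{x^3-y^3,x^3\}$.

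I expect the main obstacle to be bookkeeping rather than a single hard idea, and two points in particular deserve care. First, I must check that each normalising coordinate change keeps $\jet_3(f)$ in the set prescribed by the case: the completion of the square fixes $x$ and so preserves $\jet_3(f)=x^3$, while straightening the order--one generator $\ell$ to $x$ only alters $f$ by higher order terms and hence leaves its cubic jet of type $x^2y$ or $x^3$. Second, for the jet $x^3$ one must verify the colength ladder explicitly, namely that colength $5$ is impossible (it would force $I=J$, which is not a complete intersection) while colengths $4$ and $3$ realise the two normal forms $\langle x^2,y^2\rangle$ and $\langle x,y^3\rangle$. Assembling the surviving pairs $(\jet_3(f),I)$ together with the containments $J\subseteq I$ recorded above reproduces exactly alternatives (a) and (b), and indeed every row of Table~\eqref{eq:3jets}.
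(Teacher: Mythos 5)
Your argument is correct and recovers every row of Table~\eqref{eq:3jets}, but it travels a genuinely different road from the paper's. The paper fixes a local degree ordering, uses the invariance of the Hilbert--Samuel function under passage to the leading ideal to reduce to five candidate monomial leading ideals, eliminates two of them by Iarrobino's theorem on Hilbert functions of complete intersections, and then normalises the remaining three by explicit manipulation of generators. You instead split on whether $I$ contains an element of order one: when it does, you read $\dim_\C(R/I)$ as an intersection multiplicity along the smooth branch $\ell=0$ and use $x^2\in J$ to force that branch to be tangent to $x=0$, which simultaneously excludes the jet $x^3-y^3$ and pins down $I=\langle x,y^3\rangle$ after straightening; when $I\subseteq\mathfrak{m}^2$, you enumerate the ideals between $J$ and $\mathfrak{m}^2$ inside the Artinian algebra $R/J$ (legitimate since $\mathfrak{m}^3\subseteq J$, so every subspace of $\mathfrak{m}^2/J$ is an ideal) and discard those requiring three generators. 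The normal forms are then reached by essentially the same coordinate changes as in the paper (straightening the order-one generator, completing the square), but your exclusion criterion --- the minimal number of generators, via Nakayama --- replaces the paper's appeal to standard-basis theory and to Iarrobino, making your version more elementary and self-contained, at the price of a slightly more ad hoc enumeration. Two points worth making explicit in a final write-up: that an order-one element of the two-generated ideal $I$ may be taken as one of its two generators (Nakayama, since such an element is not in $\mathfrak{m}I\subseteq\mathfrak{m}^2$), and that the admissible branch in the first case is only \emph{tangent} to $x=0$, the identification with $x=0$ itself being exactly the jet-preserving coordinate change you invoke.
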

   \begin{proof}
     If $>$ is any \emph{local degree} ordering on $R$, then the
     Hilbert-Samuel functions of $R/I$ and of $R/L_>(I)$ coincide,
     where $L_>(I)$ denotes the leading ideal of $I$ (see
     e.g. \cite{GP02} Proposition 5.5.7). In particular,
     $\dim_\C(R/I)=\dim_\C(R/L_>(I))$ and thus
     \begin{displaymath}
       L_>(I)\in\big\{\langle x^2,xy^2,y^3\rangle,\langle x^2,xy,y^2\rangle,\langle
       x^2,xy,y^3\rangle,\langle x^2,y^2\rangle,\langle
       x,y^3\rangle\},
     \end{displaymath}
     since $\langle x^2,xy^2,y^3\rangle\subset I$.

     Taking $>$, for a moment, to be the local
     degree ordering on $R$ with $y>x$ we deduce at once that $I$ does
     not contain any power series with a linear term in $y$. For the
     remaining part of the proof $>$ will be the local degree ordering
     on $R$ with $x>y$.

     \underline{1st Case:} $L_>(I)=\langle x^2,xy^2,y^3\rangle$ or 
    $L_>(I)=\langle x^2,xy,y^2\rangle$.
       Thus the graph of the slope $H^0_{R/I}$ of the Hilbert-Samuel  function of $R/I$
       would be as shown in Figure \ref{fig:fp-histogram}, which
       contradicts the fact that $I$ is a complete intersection due to
       \cite{Iar77} Theorem 4.3.
     \begin{figure}[h]
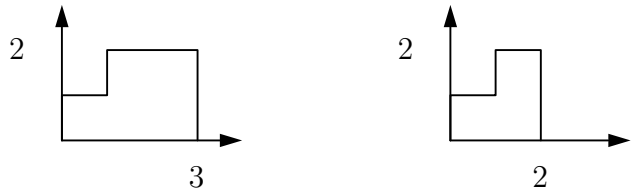

       \smallskip
       \begin{center}
         \begin{texdraw}
           \drawdim cm
           \setunitscale 0.6
           \arrowheadtype t:F
           \arrowheadsize l:0.5 w:0.3
           \move (0 0)
           \rlvec (0 1) \rlvec (1 0)
           \rlvec (0 1) \rlvec (2 0)
           \rlvec (0 -2)
           \move (0 0) \avec (0 3)
           \move (0 0) \avec (4 0)
           \move (0 0)
           \rmove (3 -0.6) \textref h:C v:T \htext{$3$}
           \move (-0.8 2)  \textref h:R v:C \htext{$2$}
         \end{texdraw}
         \hspace*{2cm}
         \begin{texdraw}
           \drawdim cm
           \setunitscale 0.6
           \arrowheadtype t:F
           \arrowheadsize l:0.5 w:0.3
           \move (0 0)
           \rlvec (0 1) \rlvec (1 0)
           \rlvec (0 1) \rlvec (1 0)
           \rlvec (0 -2)
           \move (0 0) \avec (0 3)
           \move (0 0) \avec (4 0)
           \move (0 0)
           \rmove (2 -0.6) \textref h:C v:T \htext{$2$}
           \move (-0.8 2)  \textref h:R v:C \htext{$2$}
         \end{texdraw}
         \medskip
         \caption{The graphs of $H^0_{R/\langle x^2,xy^2,y^3\rangle}$
           respectively of $H^0_{R/\langle x^2,xy,y^2\rangle}$.}
         \label{fig:fp-histogram}
       \end{center}
     \end{figure}

     \underline{2nd Case:} $L_>(I)=\langle x^2,xy,y^3\rangle$.
     Then we may assume
     \begin{displaymath}
       g=x^2+\alpha\cdot y^2+h.o.t.\;\;\;\mbox{ and }\;\;\; h=xy+\beta\cdot y^2+h.o.t..
     \end{displaymath}
     Since $x^2\in I$ there are power series $a,b\in R$ such that
     \begin{displaymath}
       x^2=a\cdot g+b\cdot h.
     \end{displaymath}
     Thus the leading monomial of $a$ is one, $a$ is a unit and
     $g\in\langle x^2,h\rangle$. We may therefore assume that
     $g=x^2$. Moreover, since the intersection multiplicity of $g$ and
     $h$ is $\dim_\C(R/I)=4$, $g$ and $h$ cannot have a common tangent
     line in the origin, i.\ e.\ $\beta\not=0$. Thus, since $g=x^2$,
     we may assume that $h=xy+y^2\cdot u$ with $u=\beta+h.o.t$ a unit.

     In new coordinates $\widetilde{x}=x\cdot\sqrt{u}$ and
     $\widetilde{y}=y\cdot\frac{1}{\sqrt{u}}$ we have
     \begin{displaymath}
       I=\langle \widetilde{x}^2,\widetilde{x}\widetilde{y}+\widetilde{y}^2\rangle.
     \end{displaymath}
     Note that by the coordinate change $\jet_3(f)$ only changes by a constant,
     that $\frac{\partial f}{\partial \widetilde{x}},\frac{\partial
       f}{\partial \widetilde{y}}\in I$ and that $\langle
     \widetilde{x},\widetilde{y}\rangle^3\subset I$, but
     $\widetilde{x}\widetilde{y},\widetilde{y}^2\not\in I$. Thus
     $\jet_3(f)=x^3$.

     Setting now $\bar{x}=\widetilde{x}$ and
     $\bar{y}=\widetilde{x}+2\widetilde{y}$, then
     $\bar{y}^2=\widetilde{x}^2+4\cdot
     (\widetilde{x}\widetilde{y}+\widetilde{y}^2)\in I$ and thus,
     considering colengths,
     \begin{displaymath}
       I=\langle \bar{x}^2,\bar{y}^2\rangle.
     \end{displaymath}
     Moreover, the $3$-jet of $f$ does not change with respect
     to the new coordinates, so that we may assume we worked with
     these from the beginning.

     \underline{3rd Case:} $L_>(I)=\langle x^2,y^2\rangle$.
     Then we may assume
     \begin{displaymath}
       g=x^2+\alpha\cdot xy+h.o.t.\;\;\;\mbox{ and }\;\;\; h=y^2+h.o.t.
     \end{displaymath}
     As in the second case we deduce that w.l.o.g.\ $g=x^2$ and
     thus $h=y^2\cdot u$, where $u$ is a unit. But then $I=\langle
     x^2,y^2\rangle$.

     \underline{4th Case:} $L_>(I)=\langle x,y^3\rangle$.
     Then we may assume
     \begin{displaymath}
       g=x+h.o.t.\;\;\;\mbox{ and }\;\;\; h=y^3+h.o.t.
     \end{displaymath}
     since there is no power series in $I$ involving a linear term in
     $y$. In new coordinates $\widetilde{x}=g$ and
     $\widetilde{y}=y$ we have
     \begin{displaymath}
       I=\big\langle \widetilde{x},\widetilde{h}\big\rangle,
     \end{displaymath}
     and we may assume that $\widetilde{h}=\widetilde{y}^3\cdot u$, where $u$ is a
     unit only depending on $\widetilde{y}$. Hence, $I=\langle
     \widetilde{x},\widetilde{y}^3\rangle$. Moreover, the $3$-jet
     of $f$ does not change with respect
     to the new coordinates, so that we may assume we worked with
     these from the beginning.
   \end{proof}

   \medskip
   \begin{center}
     \framebox[11cm]{
       \begin{minipage}{10cm}
         \medskip
         From now on we assume that $(L-K)^2>16$.
         \medskip
       \end{minipage}
       }
   \end{center}
   \bigskip

   Thus
   \begin{displaymath}
     c_1(\ke_p)^2-4\cdot c_2(\ke_p)>0,
   \end{displaymath}
   and hence $\ke_p$ is Bogomolov unstable. The Bogomolov instability
   implies the existence of a unique divisor $A_p$ which destabilises
   $\ke_p$. (See e.\ g.\ \cite{Fri98} Section 9, Corollary 2.) In
   other words, setting $B_p=L-K-A_p$, i.\ e.\
   \begin{equation}
     \label{eq:AB:0}
     A_p+B_p=L-K,
   \end{equation}
   there is an immersion
   \begin{equation}
     0\rightarrow\ko_S(A_p)\rightarrow\ke_p
   \end{equation}
   where $(A_p-B_p)^2\geq c_1(\ke_p)^2-4\cdot c_2(\ke_p)>0$ and 
   $(A_p-B_p){\intmult}H>0$ for every ample $H$. The same construction was
   considered in \cite{BFS89} and with their Proposition 1.4 it follows:
   \begin{enumerate}
   \item $\ke_p(-A_p)$ has a global section that vanishes along a
     subscheme $\widetilde{Z}_p$ of codimension $2$ and which gives
     rise to a short exact sequence:
     \begin{equation}\label{eq:AB:1}
       0\rightarrow\ko_S(A_p)\rightarrow\ke_p\rightarrow\kj_{\widetilde{Z}_p}(B_p)\rightarrow 0.
     \end{equation}
   \item The divisor $B_p$ is effective and we may assume that
     $Z'_p\subset B_p$.
   \item The divisors $A_p$ and $B_p$ satisfy the following numerical condition:
     \begin{equation}
       \label{eq:AB:2}
       \length(Z'_p)\geq A_p{\intmult}B_p \geq B_p^2+1.
     \end{equation}
   \item $A_p-B_p$ and $A_p$ are  big.
   \end{enumerate}

   \tom{
   \begin{lemma}\label{lem:big}
     Let $R$ be a big divisor. If $M$ is big and nef or if $M$ is an irreducible curve with
     $M^2\geq 0$ or if $M$ is an effective divisor without fixed component, then $R{\intmult}M>0$.
   \end{lemma}
   \begin{proof}
     If $R$ is big, then $\dim|k\cdot R|$ grows with $k^2$.
     Thus for
     $k>>0$ we can write $k\cdot R=N'+N''$ where $N'$ is ample
     and $N''$ effective (possibly zero). To see this, note that
     for $k>>0$ we can write $|k\cdot R|=|N'|+N''$, where $N''$ is the
     fixed part of $|k\cdot R|$ and $N'\cap C\not=\emptyset$ for every
     irreducible curve $C$. Then apply the Nakai-Moishezon Criterion
     to $N'$ (see also \cite{Tan04}). 

     Analogously, if $M$ is big and nef, for
     $j>>0$ we can write $j\cdot M=M'+M''$ where $M'$ is ample and
     $M''$ is effective. Therefore,
     \begin{displaymath}
       R{\intmult}M=\frac{1}{kj}\cdot \big(N'{\intmult}M'+N'{\intmult}M''+N''{\intmult}M)>0,
     \end{displaymath}
     since $N'{\intmult}M'>0$, $N'{\intmult}M''\geq 0$ and $N''{\intmult}M\geq 0$.

     Similarly, if $M$ is irreducible and has non-negative
     self-inter\-section, then
     \begin{displaymath}
       R{\intmult}M=\frac{1}{k}\cdot(N'{\intmult}M+N''{\intmult}M)>0.
     \end{displaymath}

     And if $M$ is effective without fixed component, we can apply the
     previous argument to every component of $M$.
   \end{proof}
   }

   Now let $p$ move freely in $S$. Accordingly the scheme
   $Z'_p$ moves, hence the effective divisor $B_p$ containing $Z'_p$
   moves in an algebraic family $\kb\subseteq |B|_a$ which is the
   closure of $\{B_p\;|\;p\in S, L_p\in|L-3p|, \mbox{ both general}\}$
   and which covers $S$. 
   A priori this family $\kb$ might
   have a \emph{fixed part} $C$, \label{page:fixedpart} so that for
   general $p\in S$ there is an 
   effective divisor $D_p$ moving in a fixed-part free algebraic
   family $\kd\subseteq |D|_a$  such that
   \begin{displaymath}
     B_p=C+D_p.
   \end{displaymath}
   Whenever we only refer to the algebraic class of $A_p$ respectively
   $B_p$ respectively $D_p$ we will write $A$ respectively $B$
   respectively $D$ for short.

   \medskip
   \begin{center}
     \framebox[11cm]{
       \begin{minipage}{10cm}
         \medskip
         For these considerations we assume, of course, that $\length(Z'_p)$ is constant for
         $p\in S$ general, so either $\length(Z'_p)=3$ or $\length(Z'_p)=4$.
         \medskip
       \end{minipage}
       }
   \end{center}
   \bigskip

   \section{$C=0$.}\label{sec:zero}

   Our first aim is to show that actually $C=0$ (see Lemma
   \ref{lem:C}). But in order to do so
   we first have to consider the
   boundary case that $A_p{\intmult}B_p=\length(Z_p')$.

   \begin{proposition}\label{prop:splitting}
     If $A_p{\intmult}B_p=\length(Z'_p)$, then
     there exists a non-trivial global section $0\not=s\in
     H^0\big(B_p,\kj_{Z_p'/B_p}(A_p)\big)$ whose zero-locus is
     $Z_p'$.

     In particular, $A_p{\intmult}D_p=A_p{\intmult}B_p=\length(Z_p')$ and $A_p{\intmult}C=0$.
   \end{proposition}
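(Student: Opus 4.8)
The plan is to exploit the boundary hypothesis $A_p\intmult B_p=\length(Z_p')$ to pin down the second codimension-two scheme $\widetilde{Z}_p$ appearing in \eqref{eq:AB:1}, and then to obtain the desired section simply by restricting the Serre section of \eqref{eq:vectorbundle} to the curve $B_p$. First I would compare second Chern classes: from \eqref{eq:AB:1} one has $c_2(\ke_p)=A_p\intmult B_p+\length(\widetilde{Z}_p)$, while the Serre construction gives $c_2(\ke_p)=\length(Z_p')$. The assumption therefore forces $\length(\widetilde{Z}_p)=0$, i.e.\ $\widetilde{Z}_p=\emptyset$, so that \eqref{eq:AB:1} collapses to a genuine extension of line bundles
\begin{equation*}
  0\rightarrow\ko_S(A_p)\stackrel{\phi}{\rightarrow}\ke_p\stackrel{\psi}{\rightarrow}\ko_S(B_p)\rightarrow 0.
\end{equation*}

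Next I would take the Serre section $s\in H^0(S,\ke_p)$ with zero-locus $Z_p'$ and form $\bar{s}=\psi\circ s\in H^0\big(S,\ko_S(B_p)\big)$. This $\bar{s}$ cannot vanish, since otherwise $s$ would factor through $\ko_S(A_p)$ and its zero-locus would be a divisor, contradicting the fact that $Z_p'$ is a zero-dimensional scheme of length at least three; hence I may take $B_p$ to be the effective, non-zero divisor cut out by $\bar{s}$. Restricting to $B_p$, the section $\bar{s}$ vanishes on its own zero divisor, so $\psi|_{B_p}\circ s|_{B_p}=0$ and $s|_{B_p}$ lands in $\ker(\psi|_{B_p})$. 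Because $\ko_S(B_p)$ is locally free, tensoring this extension with $\ko_{B_p}$ keeps $\phi$ injective, so $\ker(\psi|_{B_p})=\ko_{B_p}(A_p)$ and $s|_{B_p}$ defines a section $t\in H^0\big(B_p,\ko_{B_p}(A_p)\big)$; since $\phi|_{B_p}$ is a sub-bundle inclusion, the zero-scheme of $t$ equals that of $s|_{B_p}$, namely $Z_p'\subseteq B_p$. Thus $t$ is the asserted non-trivial section of $\kj_{Z_p'/B_p}(A_p)$ with zero-locus $Z_p'$.

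For the ``in particular'' statement I would write $B_p=C+D_p$ and use that, $C$ being the fixed part of $\kb$, a general $p$ does not lie on $C$, so the point-scheme $Z_p'$ is disjoint from $C$. Restricting $t$ along $0\rightarrow\ko_{D_p}(A_p-C)\rightarrow\ko_{B_p}(A_p)\rightarrow\ko_C(A_p)\rightarrow 0$ yields $t|_C\in H^0\big(C,\ko_C(A_p)\big)$, whose zero-scheme $Z_p'\cap C$ is empty; hence $t|_C$ is nowhere vanishing, $\ko_C(A_p)\cong\ko_C$, and $A_p\intmult C=\deg_C\ko_C(A_p)=0$. Consequently $A_p\intmult D_p=A_p\intmult B_p-A_p\intmult C=A_p\intmult B_p=\length(Z_p')$, and $Z_p'\subseteq D_p$.

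The main obstacle I anticipate is the scheme-theoretic bookkeeping on the curve $B_p$, which need be neither reduced nor irreducible: I must make sure that restricting the rank-two section $s$ to $B_p$ really produces a section of the sub-line-bundle $\ko_{B_p}(A_p)$ with zero-scheme exactly $Z_p'$ -- this is where $\widetilde{Z}_p=\emptyset$ and the local-freeness of $\ko_S(B_p)$, which kills the relevant $\mathrm{Tor}$, are essential -- and that $t|_C$ is genuinely nowhere vanishing even at the points of $C\cap D_p$. The Chern-class identification $\widetilde{Z}_p=\emptyset$ is the conceptual crux that makes all of this go through.
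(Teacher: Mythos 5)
Your proof is correct and follows essentially the same route as the paper's: the $c_2$ comparison forces $\widetilde{Z}_p=\emptyset$, and the Serre section restricted to $B_p$ then factors through the sub-line-bundle $\ko_{B_p}(A_p)$ with zero scheme $Z_p'$ (the paper packages this step as a $3\times 3$ commutative diagram identifying $\kj_{Z_p'/B_p}(A_p)\cong\ko_{B_p}$). The only cosmetic difference is in the ``in particular'' part, where the paper restricts the section to $D_p$ and reads off $A_p{\intmult}D_p=\length(Z_p')$ as the degree of its zero scheme, whereas you restrict to $C$ and trivialise $\ko_C(A_p)$ -- both are immediate once one notes $p\notin C$.
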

   \begin{proof}
     By Sequence \eqref{eq:AB:1} we have
     \begin{displaymath}
       A_p{\intmult}B_p=\length(Z'_p)=c_2(\ke_p)=A_p{\intmult}B_p+\length\big(\widetilde{Z}_p\big).
     \end{displaymath}
     Thus $\widetilde{Z}_p=\emptyset$.

     If we  merge the sequences \eqref{eq:vectorbundle},
     \eqref{eq:AB:1}, and the structure sequence of $B$ twisted by $B$
     we obtain the  exact commutative diagram in Figure~\ref{fig:AB},
     \begin{figure}[h]
       \begin{displaymath}
         \xymatrix{
           && 0 \ar[d] & 0 \ar[d] &\\
           &0\ar[r]\ar[d]& \ko_S \ar[r]\ar[d] & \ko_S \ar[r]\ar[d] &0 \\
           0\ar[r]& \ko_S(A_p) \ar[r]\ar[d] & \ke_p \ar[r]\ar[d]&\ko_S(B_p)\ar[r]\ar[d] &0 \\
           0\ar[r]& \ko_S(A_p) \ar[r]\ar[d] & \kj_{Z_p'/S}(A_p+B_p) \ar[r]\ar[d]&\ko_{B_p}(B_p)\ar[r]\ar[d] &0 \\
           &0&0&0
         }
       \end{displaymath}\centering

       \caption{The diagram showing $\ko_{B_p}=\kj_{Z_p'/B_p}(A_p)$.}
       \label{fig:AB}
     \end{figure}
     where $\ko_{B_p}(B_p)=\kj_{Z_p'/B_p}(A_p+B_p)$, or equivalently
     $\ko_{B_p}=\kj_{Z_p'/B_p}(A_p)$.
     Thus from the rightmost column we get a non-trivial global
     section, say $s$, of this bundle 
     which vanishes precisely at $Z_p'$,
     since $Z_p'$ is the zero-locus of the monomorphism of vector
     bundles $\ko_S\hookrightarrow \ke_p$. However, since $p$ is
     general we have that $p\not\in C$ and thus the restriction
     $0\not=s_{|D_p}\in H^0\big(D_p,\kj_{Z_p'/D_p}(A_p)\big)$ and it still vanishes
     precisely at $Z_p'$. Thus $A_p{\intmult}D_p=\length(Z_p')=A_p{\intmult}B_p$, and $A_p{\intmult}C=A_p{\intmult}B_p-A_p{\intmult}D_p=0$.
   \end{proof}

   We next want to show that positive self-intersection of $B$ imposes
   hard restrictions.

   \begin{lemma}\label{lem:B^2positive}
     $B^2\leq 2$ and if $B^2\in \{1,2\}$ then $A{\intmult}B=\length(Z'_p)=4$.
   \end{lemma}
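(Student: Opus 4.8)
The plan is to argue purely numerically, combining the inequality \eqref{eq:AB:2} from the basic construction with the Hodge index theorem applied to the class $A_p-B_p$. For brevity set $a=A_p\intmult B_p$ and $b=B_p^2$. Property \eqref{eq:AB:2} reads $\length(Z'_p)\geq a\geq b+1$, and since by assumption $\length(Z'_p)\in\{3,4\}$ we immediately obtain $a\leq 4$ and $b\leq 3$. Thus the whole content of the lemma is to exclude the borderline value $b=3$ and to pin down what happens for $b\in\{1,2\}$.

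The key observation is to examine $N:=A_p-B_p$. Since $A_p+B_p=L-K$, expanding $(A_p-B_p)^2=(A_p+B_p)^2-4\,A_p\intmult B_p$ gives the exact identity $N^2=(L-K)^2-4a$. As $a\leq 4$ and $(L-K)^2>16$, this yields $N^2>0$, which is consistent with the bigness of $A_p-B_p$ recorded in the construction. With $N^2>0$ the Hodge index theorem applies to the pair $N,B_p$ and gives
\begin{displaymath}
  (a-b)^2=(N\intmult B_p)^2\;\geq\;N^2\intmult B_p^2=\big((L-K)^2-4a\big)\,b.
\end{displaymath}

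The final step is an integrality argument. Because $(L-K)^2$ is the self-intersection of an integral class and exceeds $16$, in fact $(L-K)^2\geq 17$; hence whenever $b\geq 0$ the previous line forces $(a-b)^2\geq(17-4a)\,b$. Running through the finitely many admissible pairs with $1\leq b\leq a-1\leq 3$, this inequality fails for every pair except $(a,b)\in\{(4,1),(4,2)\}$: for instance $(a,b)=(4,3)$ gives $1\geq 3$, $(a,b)=(3,1)$ gives $4\geq 5$, and $(a,b)=(2,1)$ gives $1\geq 9$. Therefore $b\geq 1$ forces $a=4$ and $b\in\{1,2\}$; since then $4=a\leq\length(Z'_p)\leq 4$ we also get $\length(Z'_p)=4$. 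Together with the trivial case $b\leq 0$ we conclude that $B^2\leq 2$ always, and that $A\intmult B=\length(Z'_p)=4$ as soon as $B^2\in\{1,2\}$.

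I expect the only genuinely delicate point to be securing $N^2>0$ so that the Hodge index theorem is available: this is exactly where the hypotheses $(L-K)^2>16$ and $\length(Z'_p)\leq 4$ enter in tandem. The closely related subtlety is the integrality of $(L-K)^2$, which upgrades the strict inequality $(L-K)^2>16$ to $(L-K)^2\geq 17$ and is precisely what rules out the otherwise-surviving case $B^2=3$. Everything else reduces to the short finite check above.
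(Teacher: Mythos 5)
Your proof is correct and follows essentially the same strategy as the paper: combine the bound $\length(Z'_p)\geq A{\intmult}B\geq B^2+1$ from \eqref{eq:AB:2} with the integrality upgrade $(L-K)^2\geq 17$ and the Hodge index theorem, then eliminate the finitely many remaining pairs $(A{\intmult}B,B^2)$. The only (harmless) difference is that you apply the index theorem to the pair $A-B$ and $B$, whereas the paper first checks $A^2>0$ and applies it to $A$ and $B$; both computations close out the same list of cases.
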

   \begin{proof}
     We may suppose that $B^2>0$. 
     By \eqref{eq:AB:2} we know that $4\geq
     A{\intmult}B>B^2$ and by assumption $(A+B)^2\geq 17$, so that
     \begin{displaymath}
       A^2=(A+B)^2-2\cdot A{\intmult}B-B^2\geq 17-8-3>0
     \end{displaymath}
     and the Hodge Index
     Theorem \tom{ (see e.g.\ \cite{BHPV04}) } gives
     \begin{displaymath}
       (A{\intmult}B)^2\geq A^2\cdot B^2
       \geq (17-2\cdot A{\intmult}B-B^2)\cdot B^2.
     \end{displaymath}
     But then $B^2\geq 3$ leads to the contradiction $16\geq
     18$. Similarly, $A{\intmult}B\leq 3$ leads to $9\geq (11-B^2)\cdot B^2$
     which is neither for $B^2=1$ nor for $B^2=2$ fulfilled. This
     shows that $A{\intmult}B=4$, and thus by \eqref{eq:AB:2} also $\length(Z'_p)=4$.
   \end{proof}

   Even though we do not know whether $\kb$ has a fixed part or not, we
   can get some information about the moving part $\kd$. 

   \begin{lemma}\label{lem:DD}
     Let $p\in S$ be general and suppose $\length(Z'_p)=4$.
     \begin{enumerate}
     \item If $D_p$ is irreducible, then $\dim(\kd)\geq 2$ and
       $D_p^2\geq 3$.
     \item If $D_p$ is reducible but the part containing $p$ is reduced,
       then
       either $D_p$ has a component singular in $p$ and $D_p^2\geq 5$
       or at least two components of $D_p$ pass through $p$ and
       $D_p^2\geq 2$.
     \item If $D_p^2\leq 1$, then $D_p=k\cdot E_p$ where $k\geq 2$,
       $E_p$ is irreducible and $E_p^2=0$. In particular, $D_p^2=0$.
     \end{enumerate}
   \end{lemma}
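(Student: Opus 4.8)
The plan is to read off the local structure of $D_p$ at $p$ from the hypothesis $\length(Z'_p)=4$ and then convert it into the three self-intersection bounds by letting $p$ vary over $S$. Since $\length(Z'_p)=4$, Table \eqref{eq:3jets} forces $\kj_p=\langle x_p^2,y_p^2\rangle$, and as $p$ is general we have $p\notin C$, so $Z'_p$ already sits inside the moving part $D_p$. A local equation of $D_p$ at $p$ thus lies in $\langle x_p^2,y_p^2\rangle$, which says exactly that $\mult_p(D_p)\geq 2$ with tangent cone of the shape $\alpha x_p^2+\beta y_p^2$ (no $x_py_p$-term). Hence in every case $D_p$ carries a point of multiplicity at least two at the \emph{general} point $p$, and this is the single geometric fact that drives all three statements. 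Throughout I would use two elementary facts about the fixed-part-free family $\kd$, which covers $S$: a curve moving in a family covering $S$ has non-negative self-intersection, and it meets every nonzero fixed effective divisor (some translate passes through a chosen point of that divisor, and all translates are numerically equivalent).

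For part (a) I would first bound $\dim(\kd)$. As $D_p$ is irreducible and reduced, its singular locus is finite, so the fibre of $p\mapsto[D_p]$ over a fixed member of $\kd$ lies in that finite singular locus; the map therefore has finite fibres and $\dim(\kd)\geq 2$. For $D_p^2\geq 3$ I would study the subfamily of members of $\kd$ having a double point at a fixed general point, which an incidence computation shows has dimension $\dim(\kd)-2$. If $\dim(\kd)\geq 3$ this subfamily is positive-dimensional, and two of its general members meet at the common double point with intersection multiplicity at least $2\intmult 2=4$, so $D_p^2\geq 4$. The \textbf{main obstacle} is the boundary case $\dim(\kd)=2$, where this subfamily is finite; there I would instead let the double point collide, estimating the intersection of $D_p$ with an infinitely near member $D_{p'}$ by conservation of intersection numbers, and it is exactly here that the sharper local datum $\kj_p=\langle x_p^2,y_p^2\rangle$, rather than a bare double point, is needed to secure $D_p^2\geq 3$.

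For part (b) I would reduce to (a) componentwise, writing the part of $D_p$ through $p$ as a sum of reduced irreducible curves. If a single component $E$ carries the whole multiplicity, then $E$ is irreducible, reduced, and satisfies $Z'_p\subset E$, so the argument of (a) applied to $E$ gives $E^2\geq 3$; since $D_p$ is reducible there is a nonzero effective remainder $R=D_p-E$, and as $E$ moves covering $S$ it meets $R$, giving $E\intmult R\geq 1$ and $R^2\geq 0$, whence $D_p^2\geq E^2+2\,E\intmult R+R^2\geq 5$. If instead at least two components $D_1,D_2$ pass through $p$, then $D_1\intmult D_2\geq 1$ (they meet at $p$) and $D_i^2\geq 0$ (both move in covering families), so $D_p^2\geq(D_1+D_2)^2\geq 2$.

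Finally, part (c) is a formal consequence. If $D_p^2\leq 1$, then by (a) $D_p$ is not irreducible and reduced, and by (b) the part of $D_p$ through $p$ is not reduced; moreover, two distinct multiple components through $p$ would give $D_p^2\geq 2$ as in (b), so $D_p$ has a single component $E_p$ of multiplicity $k\geq 2$ through $p$. Writing $D_p=kE_p+R$ with $E_p$ irreducible and $R$ effective, the bound $1\geq D_p^2\geq k^2E_p^2\geq 4E_p^2$ together with $E_p^2\geq 0$ (as $E_p$ moves covering $S$) forces $E_p^2=0$. Then $D_p^2=2k\,E_p\intmult R+R^2$, and if $R\neq 0$ the covering argument yields $E_p\intmult R\geq 1$, hence $D_p^2\geq 2k\geq 4$, a contradiction. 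Therefore $R=0$, so $D_p=kE_p$ with $k\geq 2$, $E_p$ irreducible, $E_p^2=0$ and $D_p^2=0$, as claimed.
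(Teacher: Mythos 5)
Your reduction of everything to the local fact $\kj_p=\langle x_p^2,y_p^2\rangle$, hence $\mult_p(D_p)\geq 2$ at the general point $p$, is exactly the paper's starting point, and your parts (b) and (c) essentially reproduce the paper's arguments. The genuine gap is the case you yourself flag as ``the main obstacle'': proving $D_p^2\geq 3$ in part (a) when $\dim(\kd)=2$, so that only finitely many members of $\kd$ are singular at a fixed general $p$. Your proposed fix --- degenerating $D_{p'}$ onto $D_p$ and invoking ``conservation of intersection numbers'' --- does not obviously yield more than $D_p\intmult D_{p'}\geq \mult_{p'}(D_p)\cdot\mult_{p'}(D_{p'})\geq 2$; extracting a third local intersection from the shape of the ideal $\langle x_p^2,y_p^2\rangle$ is not carried out, and it is not clear how to do it (note that the tangent cone of $D_p$ at $p$ may be a perfect square, so $D_p$ need not have an ordinary node there). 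The paper closes this case by a different dichotomy: either through $p$ and a general second point $q\in D_p$ there passes another member $D'\in\kd$, in which case irreducibility of $D_p$ gives $D_p^2=D_p\intmult D'\geq \mult_p(D_p)+\mult_q(D_p)\geq 3$; or else two general points of $S$ determine a unique member, i.e.\ $\kd$ is a two-dimensional involution, which by \cite{CC02}, Theorem 5.10, must be a linear system --- contradicting Bertini, since the general member is singular at a general (hence non-base) point. This appeal to the Chiantini--Ciliberto theorem on involutions is the missing idea; without it, or a genuinely worked-out substitute, part (a) is not proved, and parts (b) and (c) inherit the gap since they quote (a) for the component singular at $p$.

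A smaller point in part (c): after isolating the single component $E_p$ through $p$ with multiplicity $k\geq 2$ and showing $E_p^2=0$, you argue that $R\neq 0$ implies $E_p\intmult R\geq 1$ ``by the covering argument''. This fails if $R$ contains a component algebraically equivalent to $E_p$: such a component moves in a family covering $S$ and yet meets $E_p$ in $E_p^2=0$ points. The paper avoids this by first absorbing all components algebraically equivalent to $E_p$ into the term $kE_p$, writing $D_p\equiv_a kE_p+E'$ with $E'$ containing no such component, after which $E'\neq 0$ really does force $E_p\intmult E'\geq 1$ and the contradiction $1\geq D_p^2\geq 2k\geq 4$. The conclusion survives once stated up to algebraic equivalence, but as written your step is not justified.
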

   \begin{proof}
     \begin{enumerate}
     \item
       If $D_p$ is irreducible, then $\dim(\kd)\geq 2$, since $D_p$,
       containing $Z'_p$, is
       singular in $p$ by Table \eqref{eq:3jets} and since $p\in S$ is
       general. If through $p\in S$
       general and a general $q\in D_p$ there is another $D'\in\kd$,
       then due to the irreducibility of $D_p$
       \begin{displaymath}
         D_p^2=D_p{\intmult}D'\geq \mult_p(D_p)+\mult_q(D_p)\geq 3.
       \end{displaymath}
       Otherwise, $\kd$ is a two-dimensional involution whose
       general element is irreducible, so that by \cite{CC02} Theorem
       5.10 $\kd$ must be a linear system. This, however,
       contradicts the Theorem of Bertini, since the general element
       of $\kd$ would be singular.
     \item
       Suppose $D_p=\sum_{i=1}^k E_{i,p}$ is reducible but the part
       containing $p$ is reduced. Since
       $D_p$ has no fixed component and $p$ is general, each $E_{i,p}$
       moves in an at least one-dimensional family. In particular
       $E_{i,p}^2\geq 0$.

       If some $E_{i,p}$, say $i=1$, would be singular in $p$ for
       $p\in S$ general we could argue as above that $E_{1,p}^2\geq
       3$. Moreover, either $E_{2,p}$ is algebraically equivalent to
       $E_{1,p}$ and $E_{2,p}^2\geq3$, or $E_{1,p}$ and $E_{2,p}$
       intersect properly, since both vary in different, at least
       one-dimensional families. In any case we have
       \begin{displaymath}
         D_p^2\geq (E_{1,p}+E_{2,p})^2\geq 5.
       \end{displaymath}
       Otherwise, at least two components, say
       $E_{1,p}$ and $E_{2,p}$ pass through $p$, since $D_p$ is singular
       in $p$ and no component passes through $p$ with higher
       multiplicity. Hence, $E_{1,p}{\intmult}E_{2,p}\geq 1$ and therefore
       \begin{displaymath}
         D_p^2\geq 2\cdot E_{1,p}{\intmult}E_{2,p}\geq 2.
       \end{displaymath}
     \item
       From the above we see that $D_p$ is not reduced in $p$. Let therefore
       $D_p\equiv_a kE_p+E'$
       where $k\geq 2$, $E_p$ passes through $p$ and $E'$ does not contain any
       component algebraically
       equivalent to $E_p$\tom{ (note, if $E'$ contains a component
         algebraically equivalent to $E_p$ we may replace it by $E_p$
         without changing anything)}.

       Suppose $E'\not=0$.\label{eq:DD:0} Since $D_p$ has
       no fixed component both, $E_p$ and $E'$ vary in an at least one
       dimensional family covering $S$ and must therefore intersect
       properly. In particular, $E_p{\intmult}E'\geq 1$ and $1\geq D_p^2\geq 2k\cdot
       E_p{\intmult}E'\geq 4$. Thus, $E'=0$.

       We therefore may assume that
       $D_p=kE_p$ with $k\geq 2$. Then $0\leq E_p^2=\frac{1}{k^2}\cdot
       D_p^2\leq \frac{1}{4}$, which leaves only the possibility $E_p^2=0$,
       implying also $D_p^2=0$.
     \end{enumerate}
   \end{proof}

   \tom{
   \begin{lemma}\label{lem:curves-old}
     Suppose that $R\subset S$ is an irreducible curve.
     \begin{enumerate}
     \item If $(L-K){\intmult}R\in\{1,2\}$, then $R$ is smooth, rational and
       $R^2\leq (L-K){\intmult}R-3\leq -1$.
     \item If $(L-K){\intmult}R=3$, then $R^2\leq 0$, and either $R$ is a plane cubic or it is a
       smooth rational space  curve.
     \end{enumerate}
   \end{lemma}
   \begin{proof}
     Note that $S$ is embedded in some $\pp^n$ via $|L-K|$ and that
     $\deg(R)=(L-K){\intmult}R$ is just the degree of $R$ as a curve in
     $\pp^n$. Moreover, by the adjunction formula we know that
     \begin{displaymath}
       p_a(R)=\frac{R^2+R{\intmult}K}{2}+1,
     \end{displaymath}
     and since $L$ is very ample we thus get
     \begin{equation}
       \label{eq:curves:1}
       1\leq L{\intmult}R =(L-K){\intmult}R+R{\intmult}K=(L-K){\intmult}R+2\cdot\big(p_a(R)-1\big)-R^2.
     \end{equation}
     \begin{enumerate}
     \item If $\deg(R)\in\{1,2\}$, then $R$ must be a smooth,
       rational curve. Thus we deduce from \eqref{eq:curves:1}
       \begin{displaymath}
         R^2\leq (L-K){\intmult}R-3.
       \end{displaymath}
     \item If $\deg(R)=3$, then $R$ is either a plane cubic or a
       smooth space curve of genus $0$\tom{ (see e.\ g.\ \cite{Har77}
         Ex.\ IV.3.4; if $H\subseteq \pp^n$ is a linear subspace of
         minimal dimension $m>2$ containing $R$ then through $m$
         arbitrary points $p_1,\ldots,p_m$ on $R$ there is a linear subspace in $H$ and
         $H{\intmult}R\geq \sum_{i=1}^m \mult_{p_i}(R)$; thus $m=3$ and $R$ is
         smooth)}.
       If $p_a(R)=1$ then actually $L{\intmult}R\geq 3$ since otherwise $|L|$
       would embed $R$ as a rational curve of degree $1$ resp.\ $2$ in
       some projective space. In any case we are therefore done with
       \eqref{eq:curves:1}.
     \end{enumerate}
   \end{proof}
 }

   The following observations on the self intersection number of
   irreducible curves embedded via $L-K$ in our situation is an
   important tool in the proof that the fixed part $C$ does not
   exist. 

   \begin{lemma}\label{lem:curves}
     Suppose that $R\subset S$ is an irreducible curve, $L$ is very
     ample, and $L-K$ is base-point-free on $S$.
     \begin{enumerate}
     \item If $(L-K){\intmult}R=1$, then $R$ is smooth, rational and
       $R^2\leq -2$.
     \item If $(L-K){\intmult}R=2$, then one of the following two cases occurs:
       \begin{enumerate}
       \item $R$ is smooth and rational with $R^2\leq -1$, or
       \item $|L-K|$ induces a $\mathfrak{g}^1_2$ on $R$ and $L+R$
         does not separate the points of this $\mathfrak{g}^1_2$.
       \end{enumerate}
       In any case, if $R$ moves in a one dimensional algebraic family, then $R^2\not=0$.
     \end{enumerate}
   \end{lemma}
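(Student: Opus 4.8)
The plan is to study the finite morphism that $L-K$ induces on $R$ and to feed its degree into the adjunction formula. Since $L-K$ is base-point-free it defines a morphism $\phi:=\phi_{|L-K|}:S\to\pp^N$, and because $(L-K)\intmult R>0$ the curve $R$ is not contracted; thus $\bar R:=\phi(R)$ is an integral curve, $\phi|_R:R\to\bar R$ is finite of some degree $e\geq 1$, and $(L-K)\intmult R=e\intmult\deg\bar R$ with $\deg\bar R\geq 1$ (a non-degenerate integral curve in projective space has degree at least the dimension of its span). On the numerical side I would combine $L\intmult R=(L-K)\intmult R+K\intmult R$ with adjunction $K\intmult R=2p_a(R)-2-R^2$ to obtain
\begin{displaymath}
  R^2=(L-K)\intmult R-2+2p_a(R)-L\intmult R,
\end{displaymath}
and record that $L\intmult R\geq 1$ since $L$ is very ample.

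For part (a), where $(L-K)\intmult R=1$, the factorisation forces $e=\deg\bar R=1$, so $\bar R$ is a line and $\phi|_R:R\to\bar R\cong\pp^1$ is finite and birational; Zariski's Main Theorem then identifies the integral curve $R$ with the smooth curve $\pp^1$, so $p_a(R)=0$ and the displayed identity yields $R^2=-1-L\intmult R\leq -2$. For $(L-K)\intmult R=2$ the factorisation leaves exactly two possibilities. If $e=1$ and $\deg\bar R=2$, then $\bar R$ is an integral conic, hence a smooth rational curve, and the same Zariski argument gives $R\cong\pp^1$ with $R^2=-L\intmult R\leq -1$; this is subcase (a). If instead $e=2$ and $\deg\bar R=1$, then $\phi|_R:R\to\pp^1$ is a double cover, so $|L-K|$ cuts out a base-point-free $\mathfrak{g}^1_2$ on $R$, which is subcase (b).

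It then remains to verify the non-separation statement. For a general member $q_1+q_2$ of the $\mathfrak{g}^1_2$ --- two distinct smooth points with $(L-K)|_R\cong\ko_R(q_1+q_2)$ --- I would use the dualizing sheaf $\omega_R=(K+R)|_R$ of the Gorenstein curve $R$ to rewrite $(L+R)|_R\cong\omega_R(q_1+q_2)$. Riemann--Roch and Serre duality on $R$ give $h^0(\omega_R)=p_a(R)$ and $h^0\big(\omega_R(q_1+q_2)\big)=p_a(R)+1$ (the degree is $2p_a(R)$ and $h^1=h^0(\ko_R(-q_1-q_2))=0$), so restriction to $\{q_1,q_2\}$ drops $h^0$ by only one; hence $(L+R)|_R$ does not separate $q_1$ and $q_2$. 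Since sections of $L+R$ on $S$ restrict to sections of $(L+R)|_R$, the class $L+R$ fails to separate $q_1,q_2$ on $S$ as well.

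Finally, a curve moving in a one-dimensional algebraic family is algebraically equivalent to a distinct member, so $R^2\geq 0$; as subcase (a) forces $R^2<0$, a moving $R$ must lie in subcase (b). Assuming $R^2=0$, the family is a fibration $f:S\to B$ whose general member is smooth with trivial normal bundle $\ko_R(R)\cong\ko_R$, so $K_R=K|_R$ and $L|_R\cong(L-K)|_R\otimes K_R\cong K_R(q_1+q_2)$; the very same Riemann--Roch count shows $L|_R$ does not separate $q_1,q_2$, contradicting the very ampleness of $L|_R$, whence $R^2\neq 0$. The crux of the argument is subcase (b): once one spots the identity $(L+R)|_R\cong\omega_R(q_1+q_2)$, both the non-separation and the exclusion of $R^2=0$ collapse to a single Riemann--Roch computation played against the very ampleness of $L$. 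The one point demanding care is the passage from ``$L-K$ cuts out only a pencil on $R$'' to an honest base-point-free $\mathfrak{g}^1_2$, which is exactly where the base-point-freeness of $L-K$ is indispensable.
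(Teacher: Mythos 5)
Your proposal is correct and follows essentially the same route as the paper: the base-point-free system $|L-K|$ induces a finite map on $R$ whose degree factorisation gives the case split, adjunction plus $L\intmult R\geq 1$ yields the self-intersection bounds, Riemann--Roch with the dualizing sheaf $(K+R)|_R$ on the Gorenstein curve gives the non-separation of the $\mathfrak{g}^1_2$, and the triviality of $\ko_R(R)$ for a moving fibre with $R^2=0$ turns that into a contradiction with the very ampleness of $L$. The only difference is cosmetic: you package the key count as $h^0(\omega_R(q_1+q_2))=p_a(R)+1$ via Serre duality, where the paper computes $h^0((L+R)|_R-P)$ and $h^0((L+R)|_R-P-Q)$ directly.
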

   \begin{proof}
     Since $|L-K|$ is base-point-free it defines a morphism
     \begin{displaymath}
       \varphi_{|L-K|}:S\longrightarrow \pp^n
     \end{displaymath}
     and if $C=\varphi_{|L-K|}(R)$ and $\varphi:R\longrightarrow C$ denotes
     the restriction of $\varphi_{|L-K|}$ then
     \begin{displaymath}
       \deg(\varphi)\cdot\deg(C)=(L-K){\intmult}R.
     \end{displaymath}
     Moreover, by the adjunction formula we know that
     \begin{displaymath}
       p_a(R)=\frac{R^2+R{\intmult}K}{2}+1,
     \end{displaymath}
     and since $L$ is very ample we thus get
     \begin{equation}
       \label{eq:curves:1}
       1\leq L{\intmult}R =(L-K){\intmult}R+R{\intmult}K=(L-K){\intmult}R+2\cdot\big(p_a(R)-1\big)-R^2.
     \end{equation}
     \begin{enumerate}
     \item If $(L-K){\intmult}R=1$, then $C$ is a line in $\pp^n$ and $\varphi$
       is a birational morphism from $R$ to $C$. It thus is an
       isomorphism, and $R$ must be a smooth,
       rational curve. We deduce from \eqref{eq:curves:1}
       \begin{displaymath}
         R^2\leq (L-K){\intmult}R-3=-2.
       \end{displaymath}
     \item If $(L-K){\intmult}R=2$, then either the degree of $\varphi$
       is one or two.

       Suppose first that $\deg(\varphi)=1$.
       Then as above $\varphi$ is a birational morphism and hence an
       isomorphism. $C$ being an irreducible conic it is smooth and
       rational, and so is $R$. We deduce from \eqref{eq:curves:1}
       \begin{displaymath}
         R^2\leq (L-K){\intmult}R-3=-1.
       \end{displaymath}

       Consider now the case $\deg(\varphi)=2$.
       $|L-K|$ cuts out a $\mathfrak{g}^1_2$ on $R$ which induces
         the morphism $\varphi$. Even if $R$ is singular the dualizing
         sheaf on $R$ is given by the restriction of $K+R$, and it
         satisfies the Riemann-Roch formula (see e.g.\ \cite[Ex.~IV.I.9]{Har77}\tom{ or Remark
           \ref{rem:rr}}), i.e. if $\mathfrak{d}$ is any divisor on
         $C$ we have
         \begin{equation}\label{eq:rr}
           h^0(\mathfrak{d})-h^0\big((K+R)_{|R}-\mathfrak{d})=\deg(\mathfrak{d})+1-p_a(R).
         \end{equation}
         Suppose now that $P+Q\in \mathfrak{g}^1_2$ with $P$ and $Q$
         in the smooth part of $C$. Then
         \begin{displaymath}
           h^0\big((K+R)_{|R}-(L+R)_{|R}+P\big)=h^0\big(P-(L-K)_{|R}\big)=h^0(-Q)=0
         \end{displaymath}
         and
         \begin{displaymath}
           h^0\big((K+R)_{|R}-(L+R)_{|R}+P+Q\big)=h^0\big(P+Q-(L-K)_{|R}\big)=h^0(\ko_R)=1.
         \end{displaymath}
         The Theorem of Riemann-Roch \eqref{eq:rr} thus gives
         \begin{displaymath}
           h^0\big((L+R)_{|R}-P\big)=(L+R).R-1+1-p_a(R)
         \end{displaymath}
         and
         \begin{displaymath}
           h^0\big((L+R)_{|R}-P-Q\big)-1=(L+R).R-2+1-p_a(R).
         \end{displaymath}
         Hence
         \begin{displaymath}
           h^0\big((L+R)_{|R}-P\big)=h^0\big((L+R)_{|R}-P-Q\big),
         \end{displaymath}
         i.e.\ each divisor in the linear series induced by $L+R$ on
         $R$ which contains $P$ contains automatically also $Q$. The
         divisors in $|L+R|$ thus do not seperate the points $P$ and
         $Q$. 

         Suppose now that $\dim|R|_a\geq 1$ and $R^2=0$. Then $|R|_a$
         is pencil and induces a fibration of $S$ whose fibres are the
         elements of $|R|_a$ (see \cite{Kei01} App.\ B.1). But then
         $\ko_R(R)$ is trivial (see e.g.\ \cite[Lem.~8.1]{BHPV04}) and
         thus $\ko_R(L+R)=\ko_R(L)$ is very ample, which contradicts
         the fact that it does not separate the points of the
         $\mathfrak{g}^1_2$. 
     \end{enumerate}
   \end{proof}

\tom{
   \begin{lemma}
     If $\psi:R\longrightarrow C$ is a birational morphism of curves
     and $C$ is smooth, then $R$ is smooth.
   \end{lemma}
   \begin{proof}
     Consider the normalisation
     \begin{displaymath}
       \nu:\widetilde{R}\longrightarrow R
     \end{displaymath}
     and the commutative diagram
     \begin{displaymath}
       \xymatrix{
         \widetilde{R}\ar[dr]^{\varphi=\psi\circ\nu}\ar[d]_{\nu}\\
         R\ar[r]_{\psi}&C
       }
     \end{displaymath}
     Since $\psi$ and $\nu$ are birational morphisms, so is their
     composition $\varphi$. $\widetilde{R}$ and $C$ being smooth
     curves we deduce, that $\varphi$ is an isomorphism. But then
     $\psi$ is an isomorphism with inverse $\nu\circ\varphi^{-1}$. 
   \end{proof}

   \begin{remark}
     Let $C$ be an irreducible and reduced projective curve.
     \begin{enumerate}
     \item The Cartier divisors are in one-to-one correspondance with
       the invertible sheaves, and under this correspondance linear
       equivalence becomes isomorphism. In particular, the class group
       of Cartier divisors is isomorphic to the Picard group of
       $C$. (See \cite[Prop.\ II.6.13 and Prop.\ II.6.15]{Har77}.)
     \item Any Cartier divisor on $C$ is linearly equivalent to a
       Cartier divisor whose support does not intersect the singular
       locus.  (compare \cite[Ex.\ IV.1.9]{Har77}) 
     \end{enumerate}
   \end{remark}

   \begin{theorem}[Riemann-Roch on singular curves, \cite[Ex.\ IV.1.9]{Har77}]
     Let $C$ be an irreducible reduced curve.
     If $\mathfrak{d}$ is a Cartier divisor on $C$ and $\ko_C(\mathfrak{d})$, then     
     \begin{displaymath}
       \chi\big(\ko_C(\mathfrak{d})\big)=h^0\big(\ko_C(\mathfrak{d})\big)-h^1\big(\ko_C(\mathfrak{d})\big)=\deg(\mathfrak{d})+1-p_a(C).
     \end{displaymath}
   \end{theorem}
   \begin{proof}
     If $\mathfrak{d}=0$ then this follows since $h^0(\ko_C)=1$ for every
     irreducible projective variety, $h^1(\ko_C)=p_a(C)$ (see
     \cite[Ex.\ III.5.3]{Har77}) and $\deg(\mathfrak{d})=0$ in this case.

     We show now, that the statement holds for $\mathfrak{d}$ if and only if it
     holds for $\mathfrak{d}+P$ where $P$ is any point on $C$ which is not
     contained in the singular locus of $C$. If $P$ is a regular closed point
     on $C$ then its structure sheaf $\ko_P$ and its ideal sheaf
     $\ko_C(-\mathfrak{d})$ (see \cite[II.6.18]{Har77}) give rise to the exact
     sequence 
     \begin{displaymath}
       0\longrightarrow \ko_C(-\mathfrak{d})\longrightarrow
       \ko_C\longrightarrow\ko_P\longrightarrow 0.
     \end{displaymath}
     Since $P$ is a Cartier divisor, $\ko_C(P)$ is locally free of
     rank one and thus so is $\ko_C(\mathfrak{d}+P)$. Tensoring the exact
     sequence with this sheaf gives therefore the exact sequence
     \begin{displaymath}
       0\longrightarrow \ko_C(P)\longrightarrow
       \ko_C(\mathfrak{d}+P)\longrightarrow\ko_P\longrightarrow 0.
     \end{displaymath}
     Note for this that tensoring the skycraper sheaf $\ko_P$ with the
     locally free sheaf $\ko_C(\mathfrak{d}+P)$ of rank one does not effect the
     sheaf $\ko_P$. Since the Euler characteristic is additive (see
     \cite[Ex.\ III.5.1]{Har77}) and since $\chi(\ko_P)=1$ we get
     \begin{displaymath}
       \chi\big(\ko_C(\mathfrak{d}+P)\big)=\chi\big(\ko_C(\mathfrak{d})\big)+1.
     \end{displaymath}
     But then the formula holds for $\mathfrak{d}$ if and only if it holds for
     $\mathfrak{d}+P$ since $\deg(\mathfrak{d}+P)=\deg(\mathfrak{d})+1$. 

     If $\mathfrak{d}$ is any Cartier divisor it is linearly equivalent to a
     Cartier divisor of the form $\sum_{i=1}^kP_i-\sum_{j=1}^lQ_i$,
     where neither of the $P_i$ or $Q_j$ is in the singular locus of
     $C$. But then the formula holds for $\mathfrak{d}$ by the above
     considerations and since it only depends on the equivalence class
     of $\mathfrak{d}$.
   \end{proof}

   \begin{remark}\label{rem:rr}
     Let $C$ be an irreducible reduced curve in a smooth projective
     surface $S$ and denote by $K$ the canonical divisor on $S$. Then
     $K+C$ induces on $C$ a dualising sheaf $\omega_C=\ko_C(K+C)$, i.e.
     \begin{displaymath}
       H^1\big(\ko_C(\mathfrak{d})\big)\cong H^0\big(\ko_C(\omega_C-\mathfrak{d})\big)
     \end{displaymath}
     for each Cartier divisor $\mathfrak{d}$ on $C$. (Compare
     \cite[III.7.7 and III.7.11]{Har77}, note that $S$ is locally a
     complete intersection, since it is smooth, and thus
     $C$ as a divisor on $S$ is also locally a complete intersection,
     hence $C$ is Cohen-Macaulay, even Gorenstein (see also \cite[p.\ 82]{HM98}).)

     Suppose that $D$ is a divisor on $S$, then $\ko_S(D)$ is locally
     free and it thus induces a locally free sheaf $\ko_C(D)$ on
     $C$. By Riemann-Roch and the remark on the dualising sheaf we
     therefore get
     \begin{displaymath}
       h^0\big(\ko_C(D)\big)-h^0\big(\ko_C(K+C-D)\big)=D.C+1-p_a(C),
     \end{displaymath}
     since $D.C$ is the degree of the divisor $D$ restricted to $C$. 
   \end{remark}
}

   \begin{lemma}\label{lem:C}
     The family $\mathcal{B}$ introduced on page \pageref{page:fixedpart}
     has no fixed part. I.e.\ under the assumptions of Section
     \ref{sec:construction} and with the notation there, we have $C=0$.
   \end{lemma}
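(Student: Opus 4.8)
The plan is to argue by contradiction: assume the fixed part $C$ is nonzero and derive a contradiction. Throughout I will exploit that the moving family $\kd$ covers $S$, so that $D^2\geq 0$ and, more importantly, every irreducible component $E$ of $D$ moves in an at least one-dimensional family and hence satisfies $E^2\geq 0$. Combined with Lemma~\ref{lem:curves}(a) this already forbids any component $E$ of $D$ from having $(L-K)\intmult E=1$, since such a curve would be smooth rational with $E^2\leq -2$. Thus every component of $D$ has $(L-K)$-degree at least $2$; this, together with the fact that $C$ is effective and nonzero so that $(L-K)\intmult C\geq 1$ by ampleness of $L-K$, is the main engine of the argument. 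I would then organise the proof according to the sign of $B^2$.

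Suppose first that $B^2\geq 1$. Then $B^2\in\{1,2\}$ by Lemma~\ref{lem:B^2positive}, and that same lemma forces $A\intmult B=\length(Z'_p)=4$, i.e.\ we are in the boundary situation of Proposition~\ref{prop:splitting}, which yields $A\intmult C=0$. Since $A^2=(L-K)^2-2A\intmult B-B^2>16-8-2>0$, the Hodge Index Theorem applied to $A$ and $C$ gives $C^2\leq 0$, with equality only if $C$ is numerically trivial; as $C$ is effective and nonzero this is impossible, so $C^2<0$. Now $A\intmult C=0$ gives $(L-K)\intmult C=B\intmult C=C^2+C\intmult D$, and positivity of the left-hand side forces $C\intmult D\geq 1-C^2$. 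Feeding this into $B^2=C^2+2\,C\intmult D+D^2$ I expect to obtain $B^2\geq (2-C^2)+D^2\geq 3$, contradicting $B^2\leq 2$.

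The harder case is $B^2\leq 0$, where Proposition~\ref{prop:splitting} is not available. Here I would first bound the degree of the moving part: from $(L-K)\intmult B=A\intmult B+B^2\leq\length(Z'_p)$ and $(L-K)\intmult C\geq 1$ one gets $(L-K)\intmult D\leq\length(Z'_p)-1\in\{2,3\}$. Since each component of $D$ has $(L-K)$-degree at least $2$, this forces $D$ to be irreducible (one reduced component, with multiplicity one). If $\length(Z'_p)=4$, Lemma~\ref{lem:DD}(a) then gives $D^2\geq 3$, so the Hodge Index Theorem for the ample class $L-K$ yields $\big((L-K)\intmult D\big)^2\geq (L-K)^2\intmult D^2>16\cdot 3$, contradicting $(L-K)\intmult D\leq 3$. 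If $\length(Z'_p)=3$, then $(L-K)\intmult D=2$, and since $D$ moves Lemma~\ref{lem:curves}(b) forces $D^2\neq 0$, hence $D^2\geq 1$; the Hodge Index Theorem again gives $(L-K)^2\leq\big((L-K)\intmult D\big)^2/D^2\leq 4$, contradicting $(L-K)^2>16$.

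I expect the main obstacle to be precisely this $B^2\leq 0$ case, because there one loses the clean relation $A\intmult C=0$ coming from Proposition~\ref{prop:splitting} and so cannot control the fixed part directly through the Hodge Index Theorem. The resolution I am counting on is to transfer all the pressure onto the moving part $D$: the interplay between Lemma~\ref{lem:curves} (which rules out $(L-K)$-degree-one moving curves and pins down degree-two moving curves) and the elementary fact $D^2\geq 0$ for a covering family squeezes $D$ into a single irreducible curve of very low degree, whose self-intersection is then too large for the Hodge Index Theorem against $L-K$. The points I would verify most carefully are that every component of $D$ genuinely moves (which is where fixed-part-freeness of $\kd$ enters) and that the covering property really does force $D^2\geq 0$.
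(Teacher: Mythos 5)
Your argument is correct, and it takes a genuinely different route from the paper's. You split on the sign of $B^2$: when $B^2\geq 1$, Lemma \ref{lem:B^2positive} forces $A\intmult B=\length(Z_p')=4$, so Proposition \ref{prop:splitting} gives $A\intmult C=0$, and the Hodge Index Theorem applied to the pair $(A,C)$ (using $A^2\geq 17-8-2>0$) yields $C^2<0$; together with $(L-K)\intmult C=C^2+C\intmult D\geq 1$ and $D^2\geq 0$ this pushes $B^2=C^2+2\,C\intmult D+D^2$ up to at least $3$, a contradiction. When $B^2\leq 0$, you bound $(L-K)\intmult D\leq \length(Z_p')-1$, use Lemma \ref{lem:curves}(a) to rule out moving components of $(L-K)$-degree one, conclude that $D$ is a single reduced irreducible curve, and then the lower bounds on $D^2$ from Lemma \ref{lem:DD}(a) (length $4$) resp.\ Lemma \ref{lem:curves}(b) (length $3$) contradict the Hodge Index inequality $\big((L-K)\intmult D\big)^2\geq (L-K)^2\cdot D^2>16\,D^2$. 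The paper instead splits on the sign of $C^2$, derives the inequalities \eqref{eq:c:3}--\eqref{eq:c:5}, and then disposes of the twelve numerical cases of Figure \ref{fig:c} one at a time with ad hoc arguments involving $r$, $A\intmult C$, $A\intmult D$ and $C\intmult D$. Your version is shorter and more conceptual: the two applications of the Hodge Index Theorem (against $A$ and against the ample class $L-K$) absorb almost all of that bookkeeping. The facts you rely on --- that every component of the moving part has non-negative self-intersection because it moves, hence $D^2\geq 0$ and $C\intmult D\geq 0$ --- are exactly the ones established in the proof of Lemma \ref{lem:DD}, so nothing circular is used. In a final write-up you should make explicit that $C$ effective and nonzero cannot be numerically trivial (so the Hodge inequality against $A$ is strict) and that $D\neq 0$ because $\kb$ covers $S$; both are immediate from ampleness of $L-K$.
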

   \begin{proof}
     Suppose $C\not=0$ and $r$ is the number of irreducible components
     of $C$. Since $\kd$ has no fixed
     component and $A-B$ is big we know \tom{by Lemma \ref{lem:big}} that $(A-B){\intmult}D>0$, so
     that
     \begin{equation}\label{eq:c:0}
       A{\intmult}D\geq B{\intmult}D+1=D{\intmult}C+D^2+1
     \end{equation}
     or equivalently
     \begin{equation}\label{eq:c:1}
       D{\intmult}C\leq A{\intmult}D-D^2-1.
     \end{equation}
     Moreover, since $A+B$ is ample we have
     $r\leq(A+B){\intmult}C=A{\intmult}C+D{\intmult}C+C^2$ and thus
     \begin{equation}\label{eq:c:2}
       A{\intmult}C+D{\intmult}C=(A+B){\intmult}C-C^2\geq r-C^2.
     \end{equation}

     \textbf{1st Case:} $C^2\leq 0$.
     Then \eqref{eq:c:2}
     together with \eqref{eq:c:0} gives
     \begin{equation}\label{eq:c:3}
       A{\intmult}B=A{\intmult}C+A{\intmult}D\geq A{\intmult}C+D{\intmult}C+D^2+1\geq r+(-C^2)+D^2+1\geq 2,
     \end{equation}
     or the slightly stronger inequality
     \begin{equation}
       \label{eq:c:4}
       A{\intmult}B\geq (A+B){\intmult}C +(-C^2)+D^2+1.
     \end{equation}

     \textbf{2nd Case:} $C^2>0$. Then necessarily $B^2>0$ and by Lemma
     \ref{lem:B^2positive} we have $A{\intmult}B=\length(Z'_p)=4$ and 
     \begin{equation}\label{eq:c:5}
       2\geq B^2=D^2+2\cdot C{\intmult}D+C^2\geq 1.
     \end{equation}

     Since all the summands involved in the right hand side of \eqref{eq:c:3} and
     all summands in \eqref{eq:c:5} are non-negative, and since by Lemma \ref{lem:DD}
     the case $D^2=1$ cannot occur when $\length(Z_p')=4$, and since
     by Lemma \ref{lem:B^2positive} $B^2>0$ is impossible when
     $\length(Z'_p)=3$,  we are left considering the
     cases shown in Figure~\ref{fig:c}, where for the additional information (the last
     four columns) we take
     Proposition \ref{prop:splitting}, Lemma \ref{lem:B^2positive} and Lemma \ref{lem:DD} into account.
     \begin{figure}[h]
       \begin{displaymath}
         \begin{array}{|c|c|c|c|c|c||c|c|c|c|}
           \hline
           & \length(Z'_p) & D^2 & C^2 & C{\intmult}D & r & A{\intmult}B & A{\intmult}D & A{\intmult}C & D
           \\\hline\hline
           1)&       4        & 0  & -2  &     & 1 &  4  &  4  &  0  & kE,k\geq 2 \\\hline
           2)&       4        & 0  & -1  &     & 2 &  4  &  4  &  0  & kE,k\geq 2 \\\hline
           3)&       4        & 0  &  0  &     & 3 &  4  &  4  &  0  & kE,k\geq 2 \\\hline
           4)&       4        & 0  & -1  &     & 1 & 3,4 &     &     & kE,k\geq 2 \\\hline
           5)&       4        & 2  &  0  &     & 1 &  4  &  4  &  0  &    \\\hline
           6)&       4        & 0  &  0  &     & 2 & 3,4 &     &     & kE,k\geq 2 \\\hline
           7)&       4        & 0  &  0  &     & 1 &2,3,4&     &     & kE,k\geq 2 \\\hline
           8)&       3        & 0  & -1  &     & 1 &  3  &  3  &  0  &    \\\hline
           9)&       3        & 0  &  0  &     & 2 &  3  &  3  &  0  &    \\\hline
           10)&      3        & 0  &  0  &     & 1 & 2,3 &     &     &    \\\hline\hline
           11)&      4        & 0  &  1  &  0  &   & 4&  4   &  0   & kE,k\geq 2   \\\hline
           12)&      4        & 0  &  2  &  0  &   & 4 & 4   &  0   & kE,k\geq 2   \\\hline
         \end{array}
       \end{displaymath}\centering

       \caption{The cases to be considered.}
       \label{fig:c}
     \end{figure}

     Let us first and for a while consider the situation $\length(Z_p')=4$ and
     $D^2=0$, so that by Lemma \ref{lem:DD}\;\; $D=kE$ for some
     irreducible curve $E$ with $k\geq 2$ and
     $E^2=0$. Applying Lemma \ref{lem:curves} to $E$ we see that
     $(A+B){\intmult}E\geq 3$, and thus
     \begin{equation}\label{eq:c:6a}
       6\leq 3k\leq (A+B){\intmult}D=A{\intmult}D+C{\intmult}D.
     \end{equation}

     If in addition $A{\intmult}D\leq 4$, then \eqref{eq:c:1} leads to
     \begin{equation}
       6\leq 3k\leq A{\intmult}D+C{\intmult}D\leq 4+C{\intmult}D\leq 7,
     \end{equation}
     which is only possible for $k=2$, $C{\intmult}E=1$ and
     \begin{equation}
       C{\intmult}D=k\cdot C{\intmult}E=2.\label{eq:c:6}
     \end{equation}

     In Cases 1, 2  and 3 we have $A{\intmult}D=4$, and we can apply \eqref{eq:c:6}, which
     by \eqref{eq:c:2} then gives the contradiction
     \begin{displaymath}
       2=A{\intmult}C+C{\intmult}D\geq r-C^2=3.
     \end{displaymath}

     If, still under the assumption $\length(Z_p')=4$ and
     $D^2=0$, we moreover assume  $2\geq C^2\geq 0$ then by Lemma \ref{lem:B^2positive}
     \begin{displaymath}
       2\geq B^2=2\cdot C{\intmult}D+C^2\geq 2\cdot C{\intmult}D\geq 0,
     \end{displaymath}
     and thus $C{\intmult}D\leq 1$ and $C{\intmult}D+C^2\leq 2$, which due to \eqref{eq:c:6a}
     implies $A{\intmult}D\geq 5$. But then by Proposition \ref{prop:splitting}
     we have $A{\intmult}B\leq 3$ and hence $A{\intmult}C=A{\intmult}B-A{\intmult}D\leq -2$, which leads
     to the contradiction
     \begin{equation}\label{eq:c:7}
       (A+B){\intmult}C=A{\intmult}C+D{\intmult}C+C^2\leq 0,
     \end{equation}
     since $A+B$ is ample. This rules out the Cases 6,
     7, 11 and 12.

     In Case 4 Lemma \ref{lem:curves} applied to $C$ shows
     \begin{equation}\label{eq:c:8}
       2\leq (A+B){\intmult}C=A{\intmult}C+D{\intmult}C+C^2.
     \end{equation}
     Lemma \ref{lem:B^2positive} implies
     \begin{displaymath}
       2\geq B^2=2\cdot C{\intmult}D+C^2= 2k\cdot C{\intmult}E-1\geq 4\cdot C{\intmult}E-1 \geq -1,
     \end{displaymath}
     which is only possible for $C{\intmult}E=C{\intmult}D=0$. But then \eqref{eq:c:8}
     implies $A{\intmult}C\geq 3$, and since $A$ is big and $E$ is irreducible
     with non-negative self intersection \tom{ by Lemma \ref{lem:big} }
     we get the contradiction
     \begin{displaymath}
       2\leq k\cdot A{\intmult}E\leq A{\intmult}D=A{\intmult}B-A{\intmult}C\leq 1.
     \end{displaymath}

     This finishes the cases where $\length(Z_p')=4$ and $D^2=0$.

     In Cases 5 and 10 we apply Lemma \ref{lem:curves} to the
     irreducible curve $C$ with $C^2=0$ and find
     \begin{displaymath}
       (A+B){\intmult}C\geq 2.
     \end{displaymath}
     In Case 5 Equation \eqref{eq:c:4} then gives the
     contradiction
     \begin{displaymath}
       4= A{\intmult}B \geq 2-C^2+D^2+1=5.
     \end{displaymath}
     In Case 10 we get
     \begin{displaymath}
       3\geq A{\intmult}B \geq (A+B).C-C^2+D^2+1=(A+B).C+1,
     \end{displaymath}
     which shows that
     \begin{equation}\label{eq:c:corr}
       2=(A+B).C=A.C+D.C+C^2
     \end{equation}
     and that $A.B=3=\length(Z_p')$. Then by Proposition
     \ref{prop:splitting} we get $A.C=0$ and by \eqref{eq:c:corr}
     \begin{displaymath}
       D.C=2-A.C-C^2=2,
     \end{displaymath}
     which due to Lemma \ref{lem:B^2positive} leads to the contradiction
     \begin{displaymath}
       2\geq B^2=D^2+2\cdot D.C+C^2=4.
     \end{displaymath}
     In very much the same way we get in Case 8 by Lemma
     \ref{lem:curves} 
     \begin{displaymath}
       (A+B){\intmult}C\geq 2
     \end{displaymath}
     and the contradiction
     \begin{displaymath}
       3= A{\intmult}B\geq 2-C^2+D^2+1=4.
     \end{displaymath}

     It remains to consider Case 9. Here we deduce from \eqref{eq:c:4}
     that
     \begin{displaymath}
       2\geq (A+B){\intmult}C\geq r=2,
     \end{displaymath}
     and hence
     \begin{displaymath}
       2=(A+B){\intmult}C=A{\intmult}C+D{\intmult}C+C^2=D{\intmult}C.
     \end{displaymath}
     But then Lemma \ref{lem:B^2positive} leads to the final contradiction
     \begin{displaymath}
       2\geq B^2=D^2+2\cdot D{\intmult}C+C^2=4.
     \end{displaymath}
   \end{proof}

   It follows that $B_p=D_p$, $\kb=\kd$, and that $B_p$ is nef.

   \section{The General Case}\label{sec:generalcase}

   Let us review the situation and recall some notation. We are
   considering a divisor $L$ such that $L$ is very ample and $L-K$ is
   ample and base-point-free with
   $(L-K)^2>16$, and such that for a general point $p\in S$ the
   general element $L_p\in |L-3p|$ has no triple component through
   $p$ and that the equimultiplicity ideal of $L_p$ in $p$ in suitable
   local coordinates is one of the ideals in Table \eqref{eq:3jets} -- and
   for all $p$ the ideals have the same length. Moreover, we
   know that there is an algebraic family 
   $\kb=\overline{\{B_p\;|\;p\in S\}}\subset |B|_a$ without fixed
   component such that for a general point $p\in S$ 
   \begin{displaymath}
     B_p\in |\kj_{Z'_p/S}(L-K-A_p)|,
   \end{displaymath}
   where $Z'_p$ is contained in the equimultiplicity scheme $Z_p$  of $L_p$ and
   $A_p$ is the unique divisor linearly equivalent to $L-K-B_p$ such that $B_p$ and $A_p$
   destabilise the vector bundle $\ke_p$ in \eqref{eq:vectorbundle}.
   Keeping these notations in mind we can now consider the two cases that
   either $\length(Z'_p)=4$ or $\length(Z'_p)=3$.

   \begin{proposition}\label{prop:length4}
     With the above notation and assumptions, it is impossible that for
     a general point $p\in S$ the length of $Z'_p$ is $\length(Z'_p)=4$.
   \end{proposition}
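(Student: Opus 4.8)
The plan is to combine the structural input already available---$C=0$ by Lemma \ref{lem:C}, so that $B=B_p=D_p$ is nef with $0\le B^2\le 2$ by Lemma \ref{lem:B^2positive}---and to eliminate each possible value of $B^2$, the value $B^2=2$ carrying all the weight. If $B^2=1$, then Lemma \ref{lem:DD}(c) applies (as $D_p^2\le 1$) and forces $D_p^2=0$, a contradiction. If $B^2=0$, Lemma \ref{lem:DD}(c) gives $B=kE$ with $k\ge 2$, $E$ irreducible, $E^2=0$, moving in a covering family; then $B\intmult E=kE^2=0$ yields $(L-K)\intmult E=A\intmult E$, while Lemma \ref{lem:curves} forbids $(L-K)\intmult E\in\{1,2\}$ for a moving curve with $E^2=0$. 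Hence $(L-K)\intmult E\ge 3$, so $A\intmult B=k\,A\intmult E\ge 6$, contradicting $A\intmult B\le\length(Z'_p)=4$ from \eqref{eq:AB:2}.

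It remains to treat $B^2=2$. Here Lemma \ref{lem:B^2positive} gives $A\intmult B=\length(Z'_p)=4$, and Lemma \ref{lem:DD} pins the configuration down: $D_p$ cannot be irreducible (that would need $D_p^2\ge 3$), and a point non-reduced in $p$ would give $D_p^2\ne 2$ by the argument in the proof of Lemma \ref{lem:DD}(c), so we are in case (b) with $B=E_1+E_2$, $E_1^2=E_2^2=0$, $E_1\intmult E_2=1$ and an ordinary node of $B$ at $p$. Each $E_i$ moves in a covering family with $E_i^2=0$, hence is base-point-free and defines a fibration with general fibre $E_i$; Lemma \ref{lem:curves} again excludes $(L-K)\intmult E_i\in\{1,2\}$, so from $(L-K)\intmult B=A\intmult B+B^2=6$ I obtain $(L-K)\intmult E_1=(L-K)\intmult E_2=3$ and therefore $A\intmult E_1=A\intmult E_2=2$.

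The crux---and the step I expect to be the main obstacle---is to convert this into a genus restriction on the $E_i$. Since $A\intmult B=\length(Z'_p)$, Proposition \ref{prop:splitting} provides a section of $\ko_B(A)$ whose zero scheme is exactly $Z'_p$ (length four, supported at the node $p$). Restricting it to the branch $E_1$ gives a section of $\ko_{E_1}(A)$ of degree $A\intmult E_1=2$ vanishing only at $p$, where $Z'_p$ meets $E_1$ in a length-two scheme; thus $A_{|E_1}\sim 2p$ on $E_1$. On the other hand $A\sim L-K-E_1-E_2$ by \eqref{eq:AB:0}, and restricting to $E_1$ one has $\ko_{E_1}(E_1)\cong\ko_{E_1}$ (trivial normal bundle of a fibre) and $\ko_{E_1}(E_2)\cong\ko_{E_1}(p)$ (because $E_1\intmult E_2=1$), so that
\begin{displaymath}
  (L-K)_{|E_1}\sim 3p\quad\text{for every general }p\in E_1.
\end{displaymath}
As the left-hand side is a fixed class, $3p\sim 3p'$ for all general $p,p'\in E_1$, so the Abel--Jacobi map $p\mapsto[p-p']$ takes values in the finite $3$-torsion of the Jacobian and is constant; this forces $g(E_1)=0$. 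By symmetry $g(E_2)=0$.

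Finally I would derive the contradiction. With $E_1\cong E_2\cong\pp^1$, the two base-point-free pencils $|E_1|,|E_2|\cong\pp^1$ define a morphism $(f_1,f_2)\colon S\to\pp^1\times\pp^1$ of degree $E_1\intmult E_2=1$; it contracts no curve, since an irreducible contracted curve would meet both $E_1$ and $E_2$ in degree zero and hence be a common component of a fibre of $f_1$ and of a fibre of $f_2$---impossible, as these meet only in points. Therefore $S\cong\pp^1\times\pp^1$. From $K\intmult E_i=2g(E_i)-2=-2$ and $(L-K)\intmult E_i=3$ I get $L\intmult E_i=1$, i.e.\ $L=\ko(1,1)$, the quadric embedding, for which $\dim|L|=3$ and the general member of $|L|$ has degree two and so admits no triple point. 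Hence $(S,L)$ is not triple-point defective, contradicting the standing assumption of Section \ref{sec:construction}, and $\length(Z'_p)=4$ is impossible.
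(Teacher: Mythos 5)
Your proposal follows the paper's own argument closely up to the decisive point: the reduction of the hard case to $B_p=E_1+E_2$ with $E_i^2=0$, $E_1\intmult E_2=1$, $A\intmult E_i=2$, and the use of Proposition \ref{prop:splitting} to produce, for every general $q$ on the fixed fibre $E_1$, a divisor $3q$ in the fixed degree-three class $(L-K)_{|E_1}$, are exactly the paper's steps (the paper phrases the last point as: $|\ko_{E_p}(A_q+F_q)|$ contains $3q$ for infinitely many $q$). Your handling of $B^2\in\{0,1\}$ and of the non-reduced case is also correct and essentially the paper's. Where you genuinely diverge is the endgame. The paper first proves $p_a(E_1)\geq 1$ by a tangent-cone analysis (comparing the branches $x\pm ay$ of $B_p$ with $\jet_3(f_p)\in\{x^3,x^3-y^3\}$ to bound how often $E_1$ can appear in $L_p$, hence bounding $K\intmult E_1$ from below) and then shows that a degree-three series containing $3q$ for infinitely many $q$ forces $E_1$ rational --- contradiction. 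You instead deduce $g(E_1)=0$ directly from $3q\sim 3q'$ via finiteness of the $3$-torsion of the Jacobian (a clean shortcut that bypasses the tangent-cone analysis entirely) and then look for the contradiction in the global structure of $(S,L)$.

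There is, however, one genuine gap in that final step. To conclude $S\cong\pp^1\times\pp^1$ you assert that the degree-one morphism $(f_1,f_2)\colon S\to C_1\times C_2$ contracts no curve because a contracted curve would be a common component of a fibre of $f_1$ and of a fibre of $f_2$, ``impossible, as these meet only in points.'' That last clause is precisely what needs proof: an irreducible $\Gamma$ with $\Gamma\intmult E_1=\Gamma\intmult E_2=0$ sits inside a \emph{special} fibre of each fibration, and nothing established so far prevents two special fibres from sharing a component (Hodge index only yields $\Gamma^2<0$, which is entirely consistent with $\Gamma$ being an exceptional curve of a nontrivial blow-down $S\to C_1\times C_2$). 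The gap is fillable from data you already have: since $L\intmult E_i=1$ and $L$ is very ample, every fibre $F$ of $f_i$ satisfies $L\intmult F=1$ and is therefore irreducible and reduced; two irreducible fibres of the two distinct rulings cannot coincide (they are not algebraically equivalent), so no common component exists, the morphism is finite and birational onto a smooth surface, hence an isomorphism. (You should likewise justify $C_i\cong\pp^1$, e.g.\ because $E_2\to C_1$ has degree $E_1\intmult E_2=1$.) With these repairs your route is a valid alternative to the paper's.
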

   \begin{proof}
     In Section \ref{sec:zero} we have shown that $B=D$ is nef, and thus
     Lemma \ref{lem:B^2positive} shows
     \begin{equation}\label{eq:length4:1}
       0\leq B^2\leq 2.
     \end{equation}
     Then, however, Lemma \ref{lem:DD} implies that $B_p$ must be
     reducible. 

     Let us first consider the case that the part of $B_p$ through $p$
     is reduced.  Then
     by Lemma \ref{lem:DD}  and Equation \eqref{eq:length4:1} we know that
     $B_p=E_p+F_p+R$, where $E_p$ and 
     $F_p$ are irreducible and smooth in $p$. In particular,
     $E_p{\intmult}F_p\geq 1$, and thus
     \begin{multline*}
       2= B^2=E_p^2+2\cdot E_p{\intmult}F_p+F_p^2+2\cdot(E_p+F_p){\intmult}R+R^2\\
       \geq 2+2\cdot(E_p+F_p){\intmult}R.
     \end{multline*}
     Since $E_p{\intmult}F_p\geq 1$ and since the components 
     $E_p$ and $F_p$  vary in at
     least one-dimensional families and $R$ has no fixed component, $(E_p+F_p){\intmult}R\geq 1$, unless
     $R=0$. This would however give a contradiction, so
     $R=0$. Therefore necessarily, $B_p=E_p+F_p$, $E_p{\intmult}F_p=1$, and $E_p^2=F_p^2=0$.
     Then by Lemma \ref{lem:curves} $(A+B){\intmult}E_p\geq 3$ and
     $(A+B){\intmult}F_p\geq 3$, so that
     \begin{displaymath}
       4\geq A{\intmult}B\geq (A+B){\intmult}E_p+(A+B){\intmult}F_p-B^2\geq 4
     \end{displaymath}
     implies $E_p{\intmult}A_p=2=F_p{\intmult}A_p$ and
     $(A+B){\intmult}E_p=3=(A+B){\intmult}F_p$. 

     Since $E_p^2=0$ the family $|E|_a$ is a pencil and induces a
     fibration on $S$ (see \cite{Kei01} App.\ B.1). In
     particular, the generic element $E_p$ in $|E|_a$ must be
     smooth (see e.g.\ \cite{BHPV04} p.\ 110). 

     We claim that in $p$ the curve $L_p$ can share at most with one
     of $E_p$ or $F_p$ a common tangent, and it can do so at most with
     multiplicity one. For this consider local coordinates $(x_p,y_p)$ as in the Table 
     \eqref{eq:3jets}. Since $\length(Z_p')=4$ we know that
     $\kj_{Z_p',p}=\langle x_p^2,y_p^2\rangle$ does not contain $x_py_p$, and since
     $B_p=E_p+F_p\in|\kj_{Z_p'}(L-K-A)|$, where $E_p$ and $F_p$ are
     smooth in $p$, we deduce that in local coordinates their
     equations are
     \begin{displaymath}
       x_p+a\cdot y_p+h.o.t.\;\;\;\mbox{ respectively }\;\;\;x_p-a\cdot y_p+h.o.t.,
     \end{displaymath}
     where $a\not=0$. By Table \eqref{eq:3jets} the local equation
     $f_p$ of $L_p$ has either $\jet_3(f_p)=x_p^3$ and has thus no
     common tangent with either $E_p$ or $F_p$, or
     $\jet_3(f_p)=x_p^3-y_p^3$ and it is divisible at most once by one
     of $x_p-ay_p$ or $x_p+ay_p$ .

     In particular, $E_p$ can at most once be a component of $L_p$,
     and we deduce
     \begin{displaymath}
       E_p{\intmult}K_S=E_p{\intmult}L_p-E_p{\intmult}A_p-E_p{\intmult}B_p=E_p{\intmult}L_p-3\geq
       \left\{
         \begin{array}{ll}
           0,&\mbox{ if } E_p\not\subset L_p,\\
           -1, &\mbox{ if } E_p\subset L_p.
         \end{array}
       \right.
     \end{displaymath}
     But then, since the genus is an integer,
     \begin{equation}\label{eq:length4:genus}
       p_a(E_p)=\frac{E_p^2+E_p{\intmult}K_S}{2}+1=\frac{E_p{\intmult}K_S}{2}+1\geq 1.
     \end{equation}

     Fix a general point $p$ in $S$ and a general point $q$ on $E_p$, then
     $E_p=E_q$ since $|E|_a$ is a pencil. Hence,
     \begin{displaymath}
       A_p+F_p\sim_l L-K-E_p=L-K-E_q \sim_l A_q+F_q.
     \end{displaymath}
     Since $A_q.B_q=4=\length(Z'_q)$ by Proposition
     \ref{prop:splitting} there is a global section $s_q\in
     H^0\big(B_q,\kj_{Z'_q/B_q}(A_q)\big)$ whose zero locus is
     $Z'_q$. Restricting $s_q$ to $E_q$ we get a global section of
     $\ko_{E_p}(A_q)=\ko_{E_q}(A_q)$ which cuts out $2q$ on
     $E_p=E_q$. Moreover,  $\ko_{E_p}(F_q)=\ko_{E_q}(F_q)$
     has a global section which cuts out $q$. 
     Thus $\ko_{E_p}(A_p+F_p)=\ko_{E_p}(A_q+F_q)$ has for infinitely
     many points $q$ on $E_p$ a global section
     which cuts out $3q$. The linear
     system $|\ko_{E_p}(A_p+F_p)|$ thus has degree three and contains the
     divisor $3q$ for infinitely many points $q$, and it hence has no
     base point. So it defines a morphism to $\pp^k$, where $k$ is the
     dimension of the linear system. 
     $k$ cannot be one, since otherwise the morphism would have
     infinitely many ramification points. 
     If the dimension $k$ is two, the morphism maps the curve $E_p$ to the
     plane. Then either the morphism has degree three and the image is a
     line, which leads to the same contradiction, or the morphism is
     an isomorphism and the image is a cubic which has infinitely many
     reflection points, which is also impossible. 
     It remains the case that the dimension $k$ is three, but then $E_p$ has a
     $\mathfrak{g}_3^3$ and is rational, in contradiction to
     \eqref{eq:length4:genus}. 

     This finishes the case that the part of  $B_p$ through $p$ is reduced.

     It remains to consider the case that $B_p$ is not reduced in
     $p$. Using the notation of the proof of Lemma \ref{lem:DD} we write $B_p\equiv
     k\cdot E_p+E'$ with $k\geq 2$, $E_p$ irreducible passing through $p$ and
     $E'$ not containing any component algebraically equivalent to $E_p$. We
     have seen there (see p.\ \pageref{eq:DD:0}) that $E'\not=0$ implies $B_p^2\geq 4$ in
     contradiction to \eqref{eq:length4:1}. We may therefore assume
     $B_p=k\cdot E_p$ with $E_p^2\geq 0$. If $E_p^2\geq 1$, then again
     $B_p^2\geq 4$. Thus $E_p^2=0$. Applying Lemma \ref{lem:curves} to
     $E_p$ we get
     \begin{displaymath}
       3\leq (A+B){\intmult}E_p=A{\intmult}E_p,
     \end{displaymath}
     and hence the contradiction
     \begin{displaymath}
       4\geq A{\intmult}B=k\cdot A{\intmult}E_p\geq 6.
     \end{displaymath}
     This finishes the case that $B_p$ is not reduced in $p$, and
     shows thus that the case $\length(Z'_p)=4$ cannot occur.
   \end{proof}

   \begin{proposition}\label{prop:length3}
     Let $p\in S$ be general and suppose that
     $\length(Z'_p)=3$.  
     Then $B_p$ is an irreducible, smooth, rational curve
     in the pencil $|B|_a$ with $B^2=0$, $A{\intmult}B=3$ and
     $\exists\;s\in H^0\big(B_p,\ko_{B_p}(A_p)\big)$ such that $Z_p'$
     is the zero-locus of $s$.

     In particular, $S\rightarrow |B|_a$
     is a ruled surface and $2B_p$ is a 
     fixed component of $|L-3p|$.
   \end{proposition}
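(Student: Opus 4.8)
The plan is to first pin down the numerical type of $B$, then establish that $\kb$ is a rational pencil, and finally read off the double fibre from an intersection estimate. Since Lemma~\ref{lem:C} gives $B=D$ nef, we have $B^2\geq 0$, while \eqref{eq:AB:2} yields $3=\length(Z'_p)\geq A\cdot B\geq B^2+1$, so $B^2\leq 2$. As $\length(Z'_p)=3$, Lemma~\ref{lem:B^2positive} rules out $B^2\in\{1,2\}$, whence $B^2=0$. Because $B$ is nef, moves without fixed part and covers $S$, two distinct members through a common point would meet (forcing $B\cdot B'>0$) yet are algebraically equivalent (so $B\cdot B'=B^2=0$); hence through a general point there passes a unique member, $\kb$ is a pencil, and it induces a fibration of $S$ (see \cite{Kei01} App.\ B.1) whose general fibre $B_p$ is smooth and irreducible. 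Applying Lemma~\ref{lem:curves} to $B_p$, and using $(L-K)\cdot B=(A+B)\cdot B=A\cdot B$, the value $A\cdot B=1$ would force $B^2\leq -2$ and $A\cdot B=2$ would force $B^2\neq 0$ (as $B_p$ moves in a one-dimensional family); both contradict $B^2=0$, so $A\cdot B=3$.

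With $A\cdot B=3=\length(Z'_p)$ the hypothesis of Proposition~\ref{prop:splitting} is met, giving for each general $p$ a section $0\neq s\in H^0\big(B_p,\ko_{B_p}(A_p)\big)$ whose zero-locus is $Z'_p$. Since $\kj_p=\langle x_p,y_p^3\rangle$ in Table~\eqref{eq:3jets} and $B_p$ is smooth at $p$ tangent to $\{x_p=0\}$, the scheme $Z'_p$ restricts on $B_p$ to the divisor $3p$, so $\ko_{B_p}(A_p)\cong\ko_{B_p}(3p)$. Now fix a general fibre $B_0$: for every general $p\in B_0$ we have $B_p=B_0$, hence the class $A_p=L-K-B_0=:A$ is fixed, and the above reads $A|_{B_0}\sim 3p$; thus $3p\sim 3p'$ for all general $p,p'\in B_0$. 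The hard part is here: if $g(B_0)\geq 1$, then the morphism $B_0\to\mathrm{Jac}(B_0)$, $q\mapsto[3q-3p_0]$, has finite fibres (the Abel--Jacobi map is injective and multiplication by $3$ has finite kernel), so it cannot be constant on the infinite set of general points of $B_0$; this contradiction forces $g(B_0)=0$, i.e.\ $B_p$ is smooth rational. In particular $B\cdot K=2\big(p_a(B_p)-1\big)-B^2=-2$. This Abel--Jacobi step is the essential obstacle, since $A\cdot B=3$ and $B^2=0$ alone leave open an elliptic fibration; it is the analogue here of the reflection-point argument used in Proposition~\ref{prop:length4}.

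It remains to identify the ruling and the double fibre. Since $B^2=0$ and $B_p\cong\pp^1$, the induced fibration $S\to|B|_a$ exhibits $S$ as ruled. Moreover $L\cdot B=(A+B+K)\cdot B=A\cdot B+B^2+B\cdot K=3+0-2=1$. Let $C\in|L-3p|$ be arbitrary, so $\mult_p(C)\geq 3$ while $\mult_p(B_p)=1$; were $B_p\not\subseteq C$ we would get $C\cdot B_p\geq\mult_p(C)\cdot\mult_p(B_p)\geq 3>1=L\cdot B$, which is absurd, so $B_p\subseteq C$. The residual $C-B_p\in|L-B_p|$ then satisfies $\mult_p(C-B_p)\geq 2$ and $(C-B_p)\cdot B_p=L\cdot B-B^2=1$, and the same estimate $2>1$ forces $B_p\subseteq C-B_p$. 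Hence $2B_p\subseteq C$ for every $C\in|L-3p|$, i.e.\ $2B_p$ is a fixed component of $|L-3p|$, as claimed.
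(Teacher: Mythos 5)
Your overall route matches the paper's, but there is one genuine gap at the point where you pass from ``$\kb$ is a pencil with $B^2=0$'' to ``the general member $B_p$ is smooth and irreducible.'' The uniqueness argument you give (two distinct members through a common point would have to meet, contradicting $B{\intmult}B'=B^2=0$) only shows that two such members must share a component; and even granting that $\kb$ is a pencil inducing a fibration, the \emph{members of $\kb$} need not coincide with the fibres of that fibration. A priori $B_p$ could be $2E_p$ with $E_p$ an irreducible fibre (so $E_p^2=0$, still $B^2=0$, still a fixed-part-free pencil with exactly one member through each point), or a sum of several components; generic smoothness of the fibres of the induced fibration says nothing about reducedness or irreducibility of $B_p$ as a member of $\kb$. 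Indeed, the companion case $\length(Z'_p)=4$ (Lemma \ref{lem:DD}(c) and Proposition \ref{prop:length4}) shows that $kE_p$ with $k\geq 2$ is a live possibility in exactly this setup. Since your exclusion of $A{\intmult}B\in\{1,2\}$ invokes Lemma \ref{lem:curves}, which is stated for \emph{irreducible} curves, the argument does not go through as written. The missing step is the paper's component count: writing $B_p=\sum_{i=1}^r B_i$ into irreducible components (with multiplicity), each $B_i$ moves in a family covering $S$, so $B_i^2\geq 0$ and Lemma \ref{lem:curves} gives $(A+B){\intmult}B_i\geq 2$, whence $2r\leq (A+B){\intmult}B=A{\intmult}B\leq 3$ forces $r=1$; only \emph{then} does the pencil yield a fibration with $B_p$ as a smooth fibre, and only then may Lemma \ref{lem:curves} be applied to $B_p$ itself to pin down $A{\intmult}B=3$.

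The rest of your argument is sound and in two places genuinely different from (and arguably cleaner than) the paper's. For rationality you replace the paper's case analysis on the morphism defined by $|\ko_{B_p}(A_p)|$ into $\pp^k$ (total ramification, inflexion points of a plane cubic, $\mathfrak{g}_3^3$) by the observation that $3q\sim 3q'$ for infinitely many $q$ forces $g(B_p)=0$ via injectivity of the Abel--Jacobi map and finiteness of the $3$-torsion of the Jacobian. For the double fibre you apply the bound $L{\intmult}B=1$ directly to an arbitrary $C\in|L-3p|$ using only $\mult_p(C)\geq 3$ and $\mult_p(C-B_p)\geq 2$, which avoids the paper's tangency discussion and gives fixedness of $2B_p$ in all of $|L-3p|$ in one stroke. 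Both variants are correct once irreducibility, reducedness and smoothness of $B_p$ have been established as above.
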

   \begin{proof}
     Since in Section \ref{sec:zero} we have shown that $B$ is nef,
     Lemma \ref{lem:B^2positive} implies 
     \begin{equation}\label{eq:length3:0}
       B^2=0.
     \end{equation}
     Once we have shown that $B_p$ is irreducible and reduced, we then know that
     $|B|_a$ is a pencil and induces a fibration on $S$ whose fibres
     are the elements of $|B|_a$ (see \cite{Kei01} App.\ B.1). In
     particular, the general element of $|B|_a$, which is $B_p$, is
     smooth (see \cite{BHPV04} p.\ 110).
     
     Let us therefore first show that $B_p$ is irreducible and reduced. Since $\kb$
     has no fixed component we know for each irreducible component
     $B_i$ of $B_p=\sum_{i=1}^rB_i$ that $B_i^2\geq 0$, and hence by Lemma
     \ref{lem:curves} that $(A+B){\intmult}B_i\geq
     2$. Thus by \eqref{eq:AB:2} and \eqref{eq:length3:0}
     \begin{displaymath}
       2\cdot r\leq (A+B){\intmult}B=A{\intmult}B+B^2=A{\intmult}B\leq 3,
     \end{displaymath}
     which shows that $B_p$ is irreducible and reduced and that $A{\intmult}B=3$.

     Since $A{\intmult}B=3=\length(Z'_p)$ Proposition \ref{prop:splitting}
     implies that there is a section
     $s_p\in H^0\big(B_p,\ko_{B_p}(A_p)\big)$ such that
     $Z_p'$ is the zero-locus of $s_p$, which is just $3p$.
     Note that for $p\in S$ general and $q\in B_p$ general we have
     $B_p=B_q$ since $|B|_a$ is a pencil, and thus by the construction
     of $B_p$ and $B_q$ we also have
     \begin{displaymath}
       A_p\sim_l L-K-B_p=L-K-B_q\sim_l A_q.
     \end{displaymath}
     But if $A_p$ and $A_q$ are linearly equivalent, then so are the
     divisors $s_p$ and $s_q$ induced on the curve $B_p=B_q$. The
     curve $B_p$ therefore contains a linear series $|\ko_{B_p}(A_p)|$
     of degree three which contains $3q$ for a general point $q\in
     B_p$. In particular, the linear series has no base point and
     induces a morphism $\varphi:B_p\longrightarrow\pp^k$ where $k$ is
     the dimension of the linear series. 

     Suppose that $k$ was one,
     then $\varphi$ would be a morphism of degree three from the curve
     $B_p$ to a line and it would have infinitely many ramification points
     $q$, which is clearly not possible. If $k$ is two, then either
     $\varphi$ has degree three and its image is a line, which leads
     to the same contradiction, or $\varphi$ has degree one and the
     image is a plane cubic. In that case $\varphi$ is a birational
     morphism and either $B_p$ is rational (if $\im(\varphi)$ is
     singular) or $B_p$ is elliptic (if $\im(\varphi)$ is smooth).
     If $B_p$ was an elliptic curve, then 
     the general point $q$ of the cubic $\im(\varphi)$ embedded via
     the $\mathfrak{g}_3^2=|\ko_{B_p}(A_p)|$ would
     be an inflexion point. But that is clearly not possible. Finally,
     if $k$ is three, then $B_p$ has a $\mathfrak{g}_3^3$ and is thus
     rational. Alltogether we have shown that
     \begin{displaymath}
       p_a(B_p)=0,
     \end{displaymath}
     and by the adjunction formula we get
     \begin{equation}
       \label{eq:length3:2}
       K{\intmult}B=2\cdot p_a(B)-2-B^2=-2.
     \end{equation}

     Note also, that $Z'_p\subset B_p$ in view of Table \eqref{eq:3jets}
     implies that $B_p$ and $L_p$ have a common tangent in $p$.
     Suppose that $B_p$ and $L_p$
     have no common component, i.\ e.\ $B_p\not\subset L_p$, then
     \begin{displaymath}
       3\leq\mult_p(B_p)\cdot\mult_p(L_p)<
       L{\intmult}B=A{\intmult}B+B^2+K{\intmult}B=3+K{\intmult}B=1, 
     \end{displaymath}
     which contradicts \eqref{eq:length3:2}. 
     Thus, $B_p$ is at least once contained in $L_p$. 
     Moreover, if
     $2B_p\not\subset L_p$ then by Table \eqref{eq:3jets} $L'_p:=L_p-B_p$
     has multiplicity two in $p$, and it still has a common tangent
     with $B_p$ in $p$, so that
     \begin{equation}\label{eq:length3:3}
       3\leq L'_p{\intmult}B_p=L{\intmult}B-B^2=A{\intmult}B+K{\intmult}B=3+K{\intmult}B=1
     \end{equation}
     again is impossible. We conclude finally, that $B_p$ is at least
     twice contained in $L_p$

     Note finally,
     since $\dim|B|_a=1$ there is a unique curve $B_p$ in $|B|_a$
     which passes through $p$, i.\ e.\ it does not depend on the
     choice of $L_p$, so that in these cases $B_p$ respectively $2B_p$
     is actually a fixed component of $|L-3p|$.
   \end{proof}


   \section{Triple-Point Defective Surfaces are Ruled}\label{sec:ruled}

   The considerations of the previous sections prove the following
   theorem. 

   \begin{theorem}[``$S$ is a ruled surface.'']\label{thm:ruled}
     More precisely, let $L$ be a line bundle
     on $S$ such that $L$ is very ample and $L-K$ is ample and base-point-free. Suppose that
     $(L-K)^2>16$ and that for a
     general $p\in S$ the linear system $|L-3p|$ contains a curve
     $L_p$ which has no triple component through $p$, but such that
     $h^1\big(S,\kj_{Z_p}(L)\big)\not=0$ where $Z_p$ is the
     equimultiplicity scheme of $L_p$ at $p$.

     Then there is a
     ruling $\pi:S\rightarrow C$ of $S$ such that $L_p$ contains
     the fibre over $\pi(p)$ with multiplicity two.
   \end{theorem}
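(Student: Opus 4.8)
The plan is to read the theorem off directly from the machinery assembled in Sections \ref{sec:construction}--\ref{sec:generalcase}, since the hypotheses stated here are precisely the standing assumptions of those sections. First I would observe that $(L-K)^2>16$ together with $h^1\big(S,\kj_{Z_p}(L)\big)\neq 0$ for general $p$ places us inside the two framed boxes of Section \ref{sec:construction}: Serre's construction produces the rank-two bundle $\ke_p$ fitting into \eqref{eq:vectorbundle}, Bogomolov instability yields the destabilising divisor $A_p$ and its partner $B_p=L-K-A_p$, and the analysis of $\kj_p$ via Lemma \ref{lem:ideals} forces $\length(Z'_p)\in\{3,4\}$ as recorded in Table \eqref{eq:3jets}.

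Next I would check that the remaining two standing assumptions cost nothing for general $p$. The identification of $\jet_3(f_p)$ with a normal form of Lemma \ref{lem:ideals} holds over a dense open subset of $S$, and since $\length(Z'_p)$ is a constructible function of $p$ bounded by the four rows of Table \eqref{eq:3jets}, it is constant on a dense open set; we may therefore fix a value in $\{3,4\}$ throughout. With this in hand Lemma \ref{lem:C} applies and gives $C=0$, so that $B_p=D_p$ moves in a fixed-part-free family and is nef.

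The dichotomy in $\length(Z'_p)$ is then resolved by the two propositions of Section \ref{sec:generalcase}. Proposition \ref{prop:length4} shows $\length(Z'_p)=4$ is impossible, so necessarily $\length(Z'_p)=3$, and Proposition \ref{prop:length3} does all the remaining work: $B_p$ is an irreducible smooth rational curve with $B^2=0$ lying in the pencil $|B|_a$, the associated fibration $\pi\colon S\to |B|_a=:C$ is a ruling, and $2B_p$ is a fixed component of $|L-3p|$. Since $Z'_p\subset B_p$ is concentrated at $p$, the curve $B_p$ is exactly the fibre $\pi^{-1}\big(\pi(p)\big)$, and the assertion that $L_p$ contains this fibre with multiplicity two is precisely $2B_p\subseteq L_p$.

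The genuine mathematical difficulty lies entirely in the cited results rather than in this assembly: the Hodge-Index and \eqref{eq:AB:2} bookkeeping behind Lemma \ref{lem:B^2positive} and Lemma \ref{lem:C}, and the degree-three linear-series argument in Proposition \ref{prop:length3} that pins $B_p$ down as a rational fibre and simultaneously shows it enters $L_p$ with multiplicity two. The only point of the reduction itself that I would flag as requiring care is the generic constancy of $\length(Z'_p)$; once that is granted, Lemma \ref{lem:C}, Proposition \ref{prop:length4}, and Proposition \ref{prop:length3} combine immediately to yield the ruling and the double fibre.
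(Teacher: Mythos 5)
Your proposal is correct and follows exactly the paper's own route: the paper proves Theorem \ref{thm:ruled} simply by declaring that ``the considerations of the previous sections prove the following theorem,'' i.e.\ by combining the construction of Section \ref{sec:construction}, Lemma \ref{lem:C}, Proposition \ref{prop:length4} and Proposition \ref{prop:length3} precisely as you do. Your explicit justification of the generic constancy of $\length(Z'_p)$ (which the paper merely posits in a framed box) is a welcome small addition, not a deviation.
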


   In view of Propositon \ref{prop:h1vanishing} this proves Theorem
   \ref{thm:aim1}.


\end{document}